\newtheorem{theorem}{Theorem}[section]
\newtheorem{lemma}[theorem]{Lemma}
\newtheorem{proposition}[theorem]{Proposition}
\newtheorem{corollary}[theorem]{Corollary}
\numberwithin{equation}{section}
\begin{document}
\title{The pivotal set of a Boolean function}


\author{
Rapha\"el Cerf\footnote{
\noindent 
Universit\'e Paris-Saclay, CNRS, Laboratoire de math\'ematiques d'Orsay, 91405, Orsay.}
}




\maketitle
\begin{abstract}
We define the pivotal set of a Boolean function
and we prove a fundamental inequality on its expected size,
when the inputs are independent random coins of parameter~$p$.
We give two complete proofs of this inequality. 
Along the way,
we obtain the classical Margulis--Russo formula. 
We give a short proof
of the classical Hoeffding inequality for i.i.d. Bernoulli
random variables, and we use it to derive more complex 
deviations inequalities
associated to the pivotal set.
We follow finally Talagrand's footsteps and we discuss
a beautiful inequality that he proved in the uniform case.
\end{abstract}



\def\zk{\{\,1,\dots,k\,\}}
\def\zu{\{0,1\}}
\def\ud{\{\,1,\dots,d\,\}}
\def\unn{\{\,1,\dots,n\,\}}
\def\zun{\{\,0,\dots,n\,\}}
\def\omegab{\overline{\omega}}
\def\rcurs{\text{\begin{cursive}rl\end{cursive}}}
 \def \Z {{\mathbb Z}}
 \def \E {{\mathbb E}}
 \def \Zd {{\mathbb Z}^d}
 \def \Sd {S^{d-1}}
\def \wQ {\widetilde{Q}}
 \def \R {{\mathbb R}}
 \def \Rd {{\mathbb R}^d}
 \def \C {{\mathbb C}}
 \def \cD {{\mathcal D}}
 \def \cE {{\mathcal E}}
 \def \cV {{\mathcal V}}
 \def \cH {{\mathcal H}}
 \def \cR {{\mathcal R}}
 \def \cF {{\mathcal F}}
 \def \cC {{\mathcal C}}
 \def \cN {{\mathcal N}}
 \def \cT {{\mathcal T}}
 \def \cP {{\mathcal P}}
 \def \cPl {{\mathcal P}_\ell}
 \def \cS {{\mathcal S}}
 \def \tS {\smash{\widetilde S}}
 \def \bC {{\overline C}}
 \def \bE {{\overline E}}
 \def \bA {{\overline A}}
 \def \bB {{\overline B}}
 \def \ta {\text{two--arms}}
 \def \N {{\mathbb N}}
 \def \P {{\mathbb P}}
 \def \E {{\mathbb E}}

\newcommand{\set}[2]{\big\{\, #1 :  #2\, \big\}}
\newcommand{\setscolon}[2]{\big\{ #1\, ; \, #2 \big\}}
\newcommand{\Set}[2]{\Big\{\, #1\, : \, #2 \,\Big\}}

\newcommand{\bd}{\partial\, } 
\newcommand{\din}{\partial^{\, in}}
\newcommand{\dout}{\partial^{\, out}}
\newcommand{\dini}{\partial^{\, in}_\infty}
\newcommand{\douti}{\partial^{\, out}_*}
\newcommand{\dine}{\partial^{\, in}_{ext}}
\newcommand{\doutie}{\partial^{\, out,ext}_{\infty}}
\newcommand{\doute}{\partial^{\, out,ext}}
\newcommand{\dedge}{\partial^{\, edge}}
\newcommand{\dexte}{\partial^{ext,\, edge}}
\newcommand{\dstar}{\partial^{\, *}}
\newcommand{\dcirc}{\partial^{\, o}}
\newcommand{\diam}{\text{diameter}\,}
\newcommand{\shell}{\text{Shell}\,}

\newcommand{\ii}{{\underline{i}}} 
\newcommand{\uu}{{\underline{u}}} 
\newcommand{\jj}{{\underline{j}}} 
\newcommand{\kk}{{\underline{k}}} 
\renewcommand{\ll}{{\underline{l}}} 
\newcommand{\mm}{{\underline{m}}} 
\newcommand{\nn}{{\underline{n}}} 
\newcommand{\xx}{{\underline{x}}} 
\newcommand{\zz}{{\underline{z}}} 
\newcommand{\yy}{{\underline{y}}} 
\newcommand{\oo}{{\underline{0}}} 
\newcommand{\rr}{{\underline{r}}} 
\newcommand{\iinf}{{\underline{$\infty$}}} 

\renewcommand{\AA}{\underline{A}}
\newcommand{\EE}{\underline{E}}
\newcommand{\OO}{\underline{O}}
\newcommand{\BB}{\underline{B}}
\newcommand{\DD}{\underline{D}}
\newcommand{\CC}{\underline{C}}
\newcommand{\cCC}{\underline{\calC}}
\newcommand{\FF}{\underline{F}}
\newcommand{\HH}{\underline{H}}
\newcommand{\Su}{\underline{S}} 
\newcommand{\MM}{\underline{M}}  
\newcommand{\RR}{\underline{R}}

\newcommand{\tQ}{\smash{\widetilde{Q}}}
\newcommand{\La}{\Lambda}
\newcommand{\Lak}{\La(k)} 
\newcommand{\Lan}{\La(n)}
\newcommand{\Lam}{\La(m)}
\newcommand{\LaN}{\La(N)}
\newcommand{\LaM}{\La(M)}
\newcommand{\Lann}{{\La(n-\sqrt n)}} 
\newcommand{\Lanf}{\La(n,\phi)}
\newcommand{\LLanf}{\underline{\La}(n,\phi)}
\newcommand{\LLan}{\underline{\La}(n)}
\newcommand{\LLam}{\underline{\La}(m)}
\newcommand{\LLa}{\underline{\La}}
\newcommand{\M}{{\mathbb{M}}} 
\newcommand{\T}{{\mathbb{T}}} 
\newcommand{\Td}{{\mathbb{T}^d}} 
\newcommand{\Eb}{{\mathbb{E}}} 
\renewcommand{\L}{{\mathbb{L}}} 
\renewcommand{\H}{{\mathbb{H}}} 
\newcommand{\Zdn}{{\mathbb{Z}}^d_n} 
\newcommand{\ZZd}{{\underline{\mathbb{Z}}^d}} 
\newcommand{\oomega}{{\underline{\omega}}} 
\newcommand{\ze}{\underline{0}}
\newcommand{\Hd}{{\mathbb{H}}^d}
\newcommand{\Dd}{{\mathbb{D}}^d} 
\newcommand{\Nd}{{\mathbb{N}}^d}
\newcommand{\Ld}{{\mathbb{L}}^d}
\newcommand{\Ldi}{\smash{{\mathbb{L}}^{d,\infty}}}
\newcommand{\Edi}{{\mathbb{E}}^{d,\infty}}
\newcommand{\Ldstar}{\cal{L}^{d,*}} 
\newcommand{\Ed}{{\mathbb{E}}^d} 
\newcommand{\Edstar}{\cal{E}^{d,*}} 
\newcommand{\Eds}{{\mathbb{E}}^{d,*}}

\newcommand{\dist}{{\rm dist}} 
\newcommand{\distT}{\, {\rm dist}_{\calF}} 
\newcommand{\distH}{\, {\rm dist}_{\calH}} 
\newcommand{\DistP}{\, {\rm dist}} 
\newcommand{\dinf}{ {\rm d}_{\infty} }
\newcommand{\di}{{\rm d}_\infty\, }
\newcommand{\done}{\, {\rm d}_{1}\, }
\newcommand{\dtwo}{\, {\rm d}_{2}\, }

\section{Introduction}
\label{MRF}
A boolean function is a function which takes only
two values $0$ and $1$, it transforms a very complex
input into a binary result. Consider for instance
a function 
of an image which is meant to 
answer a complex question, like:

\noindent
$\bullet$ Is there a trumpet somewhere in the image?

\noindent
Or for more serious and important applications: 

\noindent
$\bullet$ Airport safety:
Based on the
scanner image, should a luggage be
checked?

\noindent
$\bullet$ Medicine:
Is there a suspicion
of cancer in the MRI image?

\noindent
While the first question can be easily answered by anyone, the second can only be answered properly by qualified baggage screeners, and the third by skilled doctors. Today, neural networks are trained on huge databases using deep learning techniques and they achieve excellent results on 
these problems, which we only dreamed of twenty years ago.
In each case,
the outcome
of the learning process can be seen as a boolean function
of the pictures, albeit a very complex one, which depends
on billions of parameters.
A related problem is to guess 
the value of a boolean function when only 
partial information is available. The difficulty
is to understand which part of the input of the 
boolean function plays a crucial role to decide 
its output.
The
analysis of boolean functions
has grown into
a very active field of research, 
due to its multiple applications
in several branches of science (see for instance
the book of O'Donnell \cite{odonnell}).
Our goal here is to introduce the notion of the pivotal
set, which is indeed
pivotal in the understanding of a boolean function. We state and prove in full detail a basic and
classical inequality controlling the expected size of the
pivotal set. 
This proof is not new, 
it is certainly known to experts in the field, however 
we feel it worth presenting and discussing the full details,
all the more that 
introductory texts on Boolean functions typically
rely on tools from the Fourier analysis to derive it.
We shall then develop deviations
estimates associated to this inequality, which
will gradually lead us to a beautiful inequality
due to Talagrand. This inequality is a part of a vast
circle of ideas originating in harmonic analysis, which 
Talagrand developed
to obtain several revolutionary inequalities. We will mention
only two of these: Talagrand's famous
concentration inequality and the isoperimetric inequality
on the hypercube \cite{IT}. 
The inequality presented here relies on
some concentration estimates, but it goes 
in the opposite direction to the isoperimetric inequality.

The project to revisit some intermediate 
results of Talagrand, with an adaptation to the biased case,
started two years ago.
The recent attribution of the Abel prize to Talagrand
prompted me to conclude this project, 
indeed this is an opportunity to explain an important contribution
of Talagrand which is less known than his other accomplishments.
However, before embarking into this program,
we need to introduce some notation and to precise
the model we shall study.

\noindent
{\bf The pivotal set.}
Let $n\geq 1$ and let $f$ be a boolean function defined
on
$\Omega\,=\,\{\,0,1\,\}^n$, i.e.,
$$f:
\omega=\big(\omega(1)\cdots\omega(n)\big)\mapsto f(\omega)\in\{\,0,1\,\}\,.$$
Given a configuration $\omega$, we say that the $i$--th coordinate $\omega(i)$
of $\omega$ is pivotal if the value of $f(\omega)$ is decided by $\omega(i)$, 
or equivalently, if changing the value of $\omega(i)$ changes the value of $f$.
More precisely,
for $\omega\in\Omega$ 
and $i\in\unn$, we denote by $\omega^i$
(respectively $\omega_i$) the configuration $\omega$ where the $i$--th component is set
to $1$ (respectively to $0$), that is,
\begin{equation*}
	\omega^i(j)\,=\,
	\begin{cases}
		\omega(j)&\quad \text{ if }j\neq i\\
		1&\quad \text{ if }j= i\\
	\end{cases}\,,\qquad
	\omega_i(j)\,=\,
	\begin{cases}
		\omega(j)&\quad \text{ if }j\neq i\\
		0&\quad \text{ if }j= i\\
	\end{cases}\,.
\end{equation*}
The 
$i$--th coordinate $\omega(i)$
is said to be pivotal for the function $f$ in the configuration $\omega$
if $f(\omega^i)\neq f(\omega_i)$.
We denote by $\cP(f,\omega)$ 
the set of the pivotal coordinates of $\omega$ for $f$.
In general, little can be said on the pivotal set, because it depends
in a complicated way on both $\omega$ and $f$.
So we will focus on the specific situation where $f$ is non--decreasing, meaning that
$$\forall \omega\in\Omega\quad
	\forall i\in\unn\qquad
f(\omega_i)\leq f(\omega^i)\,,
	$$
and the configuration $\omega$ is chosen randomly.
The pivotal set $\cP(f,\omega)$ becomes de facto a random subset
of $\unn$.

\noindent
{\bf Randomness.}
To go further, we must specify the random distribution of $\omega$.
We fix a parameter $p$ in $[0,1]$ and, to decide the status of
each component $\omega(i)$, we draw a coin of parameter $p$, the
coins being independent. For $p=\frac{1}{2}$, this corresponds to
the uniform distribution over $\Omega$.

\noindent
{\bf Influences.}
The probability that the $i$--th coordinate 
belongs to $\cP(f,\omega)$ is called the influence of the coordinate
$i$. It provides a mean of quantifying the 
importance of one specific coordinate to the output of the boolean
function, and also to compare the relative importance of two distinct
coordinates. 
The sum of the influences of all the coordinates is
called the total influence, or the averaged sensitivity. 

So, why is it important to control the pivotal set of a boolean function?
Friedgut's junta theorem tells that, whenever the total influence of
a function $f$ is small, the function $f$ can be approximated
by a junta, that is a function depending on a small number 
of coordinates. With the help of this result, O'Donnell and 
Servidio \cite{dose}
developed an algorithm able to learn a monotone function $f$
to any constant accuracy, 
in time polynomial in $n$ and in the decision tree size of $f$.
On a more general level,
the links between the structure of a boolean function $f$ 
and the probabilistic properties of its pivotal set are complex and still mysterious. A famous theorem of 
Benjamini, Kalai and Schramm \cite{BKS} states that, 
if the sum of the squares of the influences of a function is small, then
it is noise sensitive: even if two inputs are highly correlated, the
outputs of the function might be highly non--correlated.
The 
noise sensitivity theorem relies on the deep inequality due to
Talagrand that we shall present at the end of this text.

We come now to the first mathematical result presented here.
A simple exchange between summation and expectation 
shows that the total influence is in fact
equal to the expected size of the pivotal set.
It turns out that, surprisingly, 
we can compute an explicit upper bound on 
the expected size of the pivotal set.
\begin{theorem}
	\label{maine}
	For any non--decreasing boolean function $f$,
	we have
	\begin{equation}
		\label{bpf}
		E\big(|\cP(f)|\big)\,\leq\,
		\sqrt{\frac{n E(f) }{p(1-p)}}\,.
	\end{equation}
\end{theorem}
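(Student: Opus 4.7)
The plan is to establish a covariance identity for the influences and then apply the Cauchy--Schwarz inequality. The key observation is that, for a non--decreasing boolean function, the influence
$$I_i(f) \,=\, P\big(i \in \cP(f)\big) \,=\, E\big[f(\omega^i) - f(\omega_i)\big]$$
of the $i$-th coordinate can be written as a renormalized covariance: $I_i(f) = \mathrm{Cov}(\omega(i), f)/(p(1-p))$. Summing this identity over $i$ expresses the expected size of the pivotal set as the covariance between $f$ and the total number $S = \omega(1)+\cdots+\omega(n)$ of open coordinates, and the inequality will then follow from Cauchy--Schwarz together with the elementary bounds $\mathrm{Var}(S) = np(1-p)$ and $\mathrm{Var}(f) \leq E(f)$.

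To prove the covariance identity, I would start from the decomposition
$$f(\omega) \,=\, \omega(i)\, f(\omega^i) + \big(1-\omega(i)\big)\, f(\omega_i)\,,$$
valid since $\omega = \omega^i$ when $\omega(i)=1$ and $\omega = \omega_i$ when $\omega(i)=0$. Crucially, $f(\omega^i)$ and $f(\omega_i)$ depend only on the coordinates $\omega(j)$ with $j \neq i$, hence are both independent of $\omega(i)$. Multiplying the above by $\omega(i)$ collapses the second term (since $\omega(i)(1-\omega(i))=0$), and taking expectations yields $E[\omega(i) f(\omega)] = p\, E[f(\omega^i)]$; taking expectations directly gives $E[f] = p\, E[f(\omega^i)] + (1-p)\, E[f(\omega_i)]$. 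Solving this $2 \times 2$ linear system for $E[f(\omega^i)] - E[f(\omega_i)] = I_i(f)$ produces the identity. Incidentally, this is essentially the Margulis--Russo formula announced in the excerpt, as one can also check by differentiating $E_p[f] = \sum_\omega f(\omega)\, p^{|\omega|}(1-p)^{n-|\omega|}$ in $p$.

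Summing over $i \in \unn$ and using monotonicity (so that $f(\omega^i) - f(\omega_i) \in \{0,1\}$, whence $\sum_i I_i(f) = E(|\cP(f)|)$) gives
$$E\big(|\cP(f)|\big) \,=\, \frac{\mathrm{Cov}(S, f)}{p(1-p)}\,.$$
Cauchy--Schwarz then bounds this covariance by $\sqrt{\mathrm{Var}(S)\, \mathrm{Var}(f)}$. Computing $\mathrm{Var}(S) = np(1-p)$ by independence of the $\omega(i)$'s and using $\mathrm{Var}(f) = E(f)(1 - E(f)) \leq E(f)$ (because $f^2 = f$) yields exactly the announced bound. The only genuinely substantive step is the covariance identity of Step 1; the remainder is a clean application of Cauchy--Schwarz plus the trivial variance bound for $\{0,1\}$-valued random variables.
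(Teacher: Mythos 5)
Your proof is correct and is essentially the paper's own argument, merely repackaged in the language of covariances: since $X_i=\frac{\omega(i)-p}{p(1-p)}$ is centered, the paper's identity $E(|\cP(f)|)=E(S_n f)$ with $S_n=\sum_i X_i$ is exactly your $E(|\cP(f)|)=\mathrm{Cov}(S,f)/p(1-p)$ with $S=\sum_i\omega(i)$, and the Cauchy--Schwarz step and the moment computations $\mathrm{Var}(S)=np(1-p)$, $E(f^2)=E(f)\geq\mathrm{Var}(f)$ then coincide. Your decomposition $f(\omega)=\omega(i)f(\omega^i)+(1-\omega(i))f(\omega_i)$ is in fact a cleaner way to state the independence trick the paper works through by hand in its Lemma~2.1 and Corollary~2.2.
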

\noindent
Naturally,
the symbol $E$ is the expectation with respect to the probability $P$
on $\Omega$ defined by
\begin{equation*}
	\forall \omega\in\Omega\qquad
	P(\omega)\,=\,\prod_{1\leq i\leq n}p^{\omega(i)}(1-p)^{1-\omega(i)}\,,
\end{equation*}
and we remove the variable $\omega$ in the expectations, for instance
\begin{equation*}
	E(f) \,=\,
	\sum_{\omega\in\Omega}f(\omega)
	P(\omega)\,.
\end{equation*}
To the best of our knowledge, this result was
proved for the
first time by Friedgut and Kalai (see lemma~$6.1$ in \cite{FK}).
Benjamini, Kalai and Schramm quote this result (theorem~3.3 in 
\cite{BKS}) and they mentioned that it is
a consequence of \cite{CCFS} (although it does not appear explicitly
there).
\smallskip

\noindent
{\bf Application to the majority function.}
Suppose that the configuration $\omega$ represents the votes in an election, 
and that
the votes are independent and follow a Bernoulli distribution. The function
$f$ is taken to be the majority function, which is equal to $1$ if there is a
majority of votes equal to $1$ and $0$ otherwise. In this model, all the
votes play the same role, so that
$E\big(|\cP(f)|\big)=nP(1\text{ is pivotal})$
and theorem~\ref{maine} yields that
\begin{equation}
	\label{upbo}
	P(1\text{ is pivotal})
	\,\leq\,
	\frac{1 }{\sqrt{np(1-p)}}\,,
\end{equation}
hence the probability that a fixed vote plays a decisive role goes to $0$.
Moreover, in the uniform case $p=1/2$, 
the inequality~\eqref{upbo} captures the correct order, indeed it can be proved
that
\begin{equation*}
	P(1\text{ is pivotal})
	\,\sim\,
	\sqrt{\frac{2 }{\pi n}}
	\qquad \text{as}\qquad n\to\infty\,.
\end{equation*}

\smallskip

\noindent
However the constant in inequality~\eqref{bpf} is not optimal.
In fact, the original argument 
of Friedgut and Kalai \cite{FK} consisted in proving that the majority
function maximizes $E\big(|\cP(f)|\big)$ over all the boolean
functions~$f$.


Our first major goal is to provide the most elementary 
proof of theorem~\ref{maine}. 
The key idea consists in expressing the probability that a site
$i$ is pivotal with the help of a discrete derivative 
operator $\delta_i$.
As a by product, this proof yields 
another important formula due
to Margulis and Russo. We shall next work to obtain a better control
on the pivotal set, with a technique involving the Parseval formula
in Fourier analysis. 
It
is the strategy employed in modern textbooks
on boolean functions to prove
theorem~\ref{maine} 
(see theorem~$2.33$ of \cite{odonnell}), in fact 
it yields a stronger result. 
In order to understand better the link between pivotal coordinates
and the value of $f$, we shall compute the conditional
expectation of the size of the pivotal set knowing the value
of the function.
Our second major goal is to obtain more complex 
deviations inequalities
associated to the pivotal set.
The key tool here is 
the classical Hoeffding inequality
(to make the text self--contained,
we give a short proof for the case of 
i.i.d. Bernoulli
random variables).
In the end, we extend 
to the 
Bernoulli product
measure of parameter~$p$ an inequality that Talagrand proved
in the case $p=1/2$.
We follow finally Talagrand's footsteps and we discuss
a beautiful inequality that he proved in the uniform case
$p=1/2$.
Talagrand's inequality is a far--reaching extension 
of theorem~\ref{maine}. 
The starting point is the deviations 
inequality that is developed in the second major goal
of the text. 
The very statement and proof of the inequality testify to Talagrand's virtuosity.
The noise sensitivity theorem of 
Benjamini, Kalai and Schramm \cite{BKS}
relies in a crucial way upon a further generalization
of this inequality.

\nopagebreak
\section{The discrete derivative $\delta_i$}
The first step to prove the inequality~\eqref{bpf} is
to express 
		$|\cP(f,\omega)|$
with the help
of the discrete derivative of $f$ along
the $i$--th component, which is defined as
\goodbreak
\begin{equation}
	\forall\omega\in\Omega\qquad
	\delta_i f(\omega)\,=\,
	f(\omega^i)- f(\omega_i)\,. 
\end{equation}
Indeed, the
$i$--th coordinate $\omega(i)$ of $\omega$ is pivotal if
and only if 
	$\delta_i f(\omega)=1$, therefore
	\begin{equation}
		\label{inii}
		|\cP(f,\omega)|\,=\,
		\sum_{i=1}^n
	\delta_i f(\omega)\,.
	\end{equation}
Taking the expectation and using the linearity, we obtain
\begin{equation}
	E\big(|\cP(f,\omega)|\big)\,=\,
		E\Big(\sum_{i=1}^n \delta_i f(\omega)\Big)
		\,=\,
		\sum_{i=1}^n E\big(\delta_i f(\omega)\big)\,.
\end{equation}
We have now to evaluate
\begin{equation}
	\label{toev}
	E\big(\delta_i f(\omega)\big)
	\,=\,
	E\big(f(\omega^i)\big)
	-E\big(f(\omega_i)\big)
	\,.
\end{equation}
To that end, we will rely on a little trick,
recorded in the next lemma.
The simplicity of the calculation should not make us 
underestimate its power.
\begin{lemma} 
	\label{trle}
	Let $f$ be an arbitrary function defined on $\Omega$ with values
	in $\R$. For any $i\in\unn$, we have
	\begin{equation}
		\label{tric}
		\sum_{\omega\in\Omega}f(\omega)\,
 {\omega(i)}
		\, P(\omega)
		\,=\,
		\sum_{\omega\in\Omega}
	p\,f(\omega^i)
		\, P(\omega)
		\,.
	\end{equation}
\end{lemma}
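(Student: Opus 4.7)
The plan is to exploit the product structure of $P$ by decomposing every sum over $\Omega$ according to the value of the $i$-th coordinate. Write each $\omega\in\Omega$ as a pair $(\omega(i),\bar\omega)$, where $\bar\omega$ is the restriction of $\omega$ to $\unn\setminus\{i\}$, and let $\bar P$ denote the Bernoulli product measure of parameter $p$ on $\{0,1\}^{\unn\setminus\{i\}}$. Then $P(\omega)=p^{\omega(i)}(1-p)^{1-\omega(i)}\bar P(\bar\omega)$, which lets us factor all sums along the $i$-th coordinate.

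For the left-hand side, the factor $\omega(i)$ kills the term $\omega(i)=0$, leaving only the contribution of configurations with $\omega(i)=1$. The remaining terms carry a weight $p$, yielding
\begin{equation*}
\sum_{\omega\in\Omega}f(\omega)\,\omega(i)\,P(\omega)\,=\,p\sum_{\bar\omega}f(1,\bar\omega)\,\bar P(\bar\omega)\,.
\end{equation*}
For the right-hand side, the key point is that $f(\omega^i)$ does not depend on $\omega(i)$; it is by definition the value of $f$ at the configuration whose $i$-th coordinate is $1$. Hence summing over $\omega(i)\in\{0,1\}$ only produces the factor $p+(1-p)=1$, and the right-hand side also reduces to $p\sum_{\bar\omega}f(1,\bar\omega)\,\bar P(\bar\omega)$. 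The two expressions coincide, which proves the identity.

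There is no real obstacle here: the whole argument is a one-line manipulation made explicit by splitting the sum. The only point to handle carefully is the bookkeeping of which factor of $p$ comes from $P(\omega)$ and which from the multiplier in the statement, but the symmetry $f(\omega^i)=f(1,\bar\omega)$ together with the fact that $\omega(i)$ annihilates the $\omega(i)=0$ branch makes the two computations align immediately.
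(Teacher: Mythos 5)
Your proof is correct and uses essentially the same approach as the paper: both exploit the product structure of $P$ via a Fubini-type decomposition along the $i$-th coordinate, with the factor $\omega(i)$ annihilating the $\omega(i)=0$ branch. The only cosmetic difference is that you reduce both sides to the common expression $p\sum_{\bar\omega}f(1,\bar\omega)\,\bar P(\bar\omega)$, whereas the paper transforms the left-hand side directly into the right-hand side by artificially reinserting the sum over $\omega(i)$.
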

\begin{proof}
The identity~\eqref{tric} can be verified in several ways. 
Our presentation is a bit heavy, but it does not make appeal to any new notation.
So, we write
	\begin{multline}
		\label{fubi}
		\sum_{\omega\in\Omega}f(\omega)\, {\omega(i)}
		\, P(\omega)
		\,=\,
		\kern-12pt
		\sum_{\omega(1),\cdots,\omega(n)\in\zu}
		\kern-12pt
		f\big(\omega(1)\cdots\omega(n)\big) \,{\omega(i)}
		\kern-1.4pt
	\,\prod_{1\leq j\leq n}
		\kern-2.3pt
	p^{\omega(j)}(1-p)^{1-\omega(j)}\\
		\,=\,
	\sum_{\omega(1)}
	\cdots
	\sum_{\omega(i-1)}
	\sum_{\omega(i+1)}
	\cdots
	\sum_{\omega(n)}
		f\big(\omega(1)\cdots\omega(i-1)\,1\,\omega(i+1)\cdots\omega(n)\big) 
		\cr
		\hfill\times
		p
	\,\prod_{
	\genfrac{}{}{0pt}{2}{1\leq j\leq n}{j\neq i}
	}p^{\omega(j)}(1-p)^{1-\omega(j)}\\
		\,=\,
	\sum_{\omega(1)}
	\cdots
	\sum_{\omega(i-1)}
	\sum_{\omega(i+1)}
	\cdots
	\sum_{\omega(n)}
		f\big(\omega^i\big) \, P(\omega^i)\,.
	\end{multline}
Now the summand 
$f\big(\omega^i\big) \, P(\omega^i)$ does not depend any more
on $\omega(i)$, 
so we 
reintroduce artificially the summation over $\omega(i)$ by writing
\begin{equation}
	\label{arti}
		f\big(\omega^i\big) \, P(\omega^i)\,=\,
	\sum_{\omega(i)}
		f\big(\omega^i\big) 
		\,p\, P(\omega)\,.
		\end{equation}
We plug the equation~\eqref{arti} into \eqref{fubi} and we obtain
the identity~\eqref{tric}.
\end{proof}
\noindent
The next corollary presents the analogous formula dealing with $1-\omega(i)$ 
instead of $\omega(i)$. 
Of course, this formula could be obtained with a direct computation, but we will
derive it from lemma~\ref{trle}.
\begin{corollary} 
\label{reeh}
For any function
 $f$ from $\Omega$ to
$\R$ and for any $i\in\unn$, we have
	\begin{equation}
		\label{trid}
		\sum_{\omega\in\Omega}f(\omega)\,
\big(1- {\omega(i)}\big)
		\, P(\omega)
		\,=\,
		\sum_{\omega\in\Omega}
		(1-p)\,f(\omega_i)
		\, P(\omega)\,.
\end{equation} 
\end{corollary}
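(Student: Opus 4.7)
The plan is to reduce the claim to Lemma~\ref{trle} by a small algebraic manipulation rather than redoing the direct summation. The key observation is the pointwise identity $f(\omega)\big(1-\omega(i)\big) = f(\omega_i)\big(1-\omega(i)\big)$, which holds trivially: whenever the factor $1-\omega(i)$ does not vanish, we have $\omega(i)=0$ and hence $\omega=\omega_i$. Substituting this into the left-hand side of \eqref{trid} rewrites the quantity to be computed as $\sum_{\omega\in\Omega} f(\omega_i)\big(1-\omega(i)\big) P(\omega)$.

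Next I would split this as $\sum_\omega f(\omega_i) P(\omega) - \sum_\omega f(\omega_i)\,\omega(i)\, P(\omega)$ and apply Lemma~\ref{trle} to the auxiliary function $g(\omega) = f(\omega_i)$. The advantage of this choice is that $g$ does not depend on the $i$-th coordinate, so $g(\omega^i) = f\big((\omega^i)_i\big) = f(\omega_i) = g(\omega)$, and Lemma~\ref{trle} immediately gives $\sum_\omega f(\omega_i)\,\omega(i)\, P(\omega) = p \sum_\omega f(\omega_i) P(\omega)$. Subtracting this from $\sum_\omega f(\omega_i) P(\omega)$ produces the factor $1-p$ and yields exactly \eqref{trid}.

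There is essentially no genuine obstacle in this proof; the only choice to be made is the right function to feed into Lemma~\ref{trle}. The candidate $g(\omega)=f(\omega_i)$ is dictated by the appearance of $f(\omega_i)$ on the right-hand side of \eqref{trid}, and its invariance under $\omega\mapsto\omega^i$ is precisely what converts the factor $p$ produced by the lemma into the desired $1-p$.
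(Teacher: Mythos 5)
Your proof is correct, and it takes a genuinely different route from the paper's. The paper argues by a global symmetry: it introduces the flip $\omega\mapsto\omegab$ (complementing every coordinate), uses the fact that the law of $\omegab$ under $P_{1-p}$ equals the law of $\omega$ under $P_p$, and applies Lemma~\ref{trle} to the function $\omega\mapsto f(\omegab)$ under the measure $P_{1-p}$; the factor $1-p$ arises because the lemma is invoked with the complementary parameter. Your argument stays entirely inside the single measure $P_p$: you first use the trivial pointwise identity $f(\omega)\big(1-\omega(i)\big)=f(\omega_i)\big(1-\omega(i)\big)$, then split off the $\omega(i)$ term and apply Lemma~\ref{trle} to $g(\omega)=f(\omega_i)$, which satisfies $g(\omega^i)=g(\omega)$ because $(\omega^i)_i=\omega_i$. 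The factor $1-p$ then comes out of the subtraction $1-p$ rather than from a change of parameter. Your route is the more elementary of the two, since it avoids introducing $\omegab$ and a second measure; the paper's route is slightly slicker conceptually, since it exhibits the corollary as the exact mirror image of the lemma under the involution that exchanges $0\leftrightarrow 1$ and $p\leftrightarrow 1-p$. Both correctly reduce the corollary to Lemma~\ref{trle}, as intended.
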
 
\begin{proof}
	For $\omega\in\Omega$, we denote by 
	$\omegab$ the configuration obtained by flipping all the components of $\omega$:
\begin{equation*}
	\forall j\in\unn\qquad
	\omegab(j)\,=\,
	1-\omega(j)\,.
\end{equation*}
Of course, the distribution of $\omega$ under $P$ is the same as the distribution of 
$\omegab$ under $P_{1-p}$, hence
\begin{align}
	\label{tarid}
	\sum_{\omega\in\Omega}f(\omega)\,
	\big(1- {\omega(i)}\big)
	\, P(\omega)
	\nonumber
	&\,=\,
	\sum_{\omega\in\Omega}f(\omegab)\,
	\big(1- {\omegab(i)}\big)
	\, P_{1-p}(\omega)\\
	&\,=\,
	\sum_{\omega\in\Omega}f(\omegab)\,
	{\omega(i)}
	\, P_{1-p}(\omega)
	\,.
\end{align}
We apply lemma~\ref{trle} to the function $\omega\mapsto
	f(\omegab)$ and the probability measure 
	$P_{1-p}$
and we get
\begin{multline}
	\label{tarie}
	\sum_{\omega\in\Omega}f(\omegab)\,
	{\omega(i)}
	\, P_{1-p}(\omega)
	\,=\, \sum_{\omega\in\Omega}
		(1-p)\,
		f\big(\overline{\omega^i}\big)
		\, P_{1-p}(\omega)
\cr
	\,=\, (1-p)
	\sum_{\omega\in\Omega}
		f(\omegab_i)
		\, P_{1-p}(\omega)
	\,=\, (1-p)
	\sum_{\omega\in\Omega}
		f\big({\omega_i}\big)
		\, P_{p}(\omega)
		\,.
\end{multline}
Putting together
formulas~\eqref{tarid} and~\eqref{tarie}, we obtain~\eqref{trid}.
\end{proof}
\noindent
Taking advantage of lemma~\ref{trle} 
and corollary~\ref{reeh}, we have
\begin{equation}
	\label{adva}
	E\big(f(\omega^i)\big)
	-E\big(f(\omega_i)\big)
	\,=\,
	\sum_{\omega\in\Omega}f(\omega)
	\Big(\frac{\omega(i)}{p}- \frac{1-\omega(i)}{1-p}
	\Big)
	\, P(\omega)
	\,.
\end{equation}
\section{Computation of $E\big(|\cP(f)|\big)$}
The righthand side of formula~\eqref{toev} has been computed in 
formula~\eqref{adva}.
In order to rewrite this last formula in a more concise form,
	we introduce the random variable $X_i$ defined by
\begin{equation}
	\label{exprX}
\forall\omega\in\Omega\qquad
X_i(\omega)\,=\, \frac{\omega(i)}{p}- \frac{1-\omega(i)}{1-p}\,,\end{equation}
and, substituting~\eqref{adva} into~\eqref{toev}, we obtain
\begin{equation} 
		\label{modif}
		E\big(fX_i\big)\,=\,
		E\big(\delta_i f\big)
		\,.
\end{equation} 
In the next step,
we sum this formula over $i\in\unn$ and we get
\begin{equation} 
		\label{smodif}
		\sum_{1\leq i\leq n}
		E\big(fX_i\big)\,=\,
		\sum_{1\leq i\leq n}
		E\big(\delta_i f\big)
		\,.
\end{equation}
In the final step, we put the sum inside the expectation,
we combine the above formula with the initial identity~\eqref{inii}
and we get the formula stated in the next proposition.
\begin{proposition}
	\label{snf}
	For any  non--decreasing boolean function $f$,
	we have
\begin{equation}
		\label{finii}
		E\big(
		|\cP(f)|
		\big)\,=\,
		E\big(
		S_n
	f\big)\,,
	\end{equation}
where $S_n$ is the random sum 
$S_n\,=\,X_1+\cdots+X_n $.
\end{proposition}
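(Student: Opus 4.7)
The statement is really a synthesis of the identities already assembled in the two preceding sections, so the plan is to read off the conclusion by chaining them together rather than to introduce any new idea.

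The plan is to start from the identity~\eqref{inii},
$$|\cP(f,\omega)|\,=\,\sum_{i=1}^n \delta_i f(\omega)\,,$$
which crucially relies on $f$ being non--decreasing (so that $\delta_i f\in\{0,1\}$ and equals $1$ exactly when $i$ is pivotal). Taking expectations on both sides and using linearity yields
$E(|\cP(f)|)=\sum_{i=1}^n E(\delta_i f)$.

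Next I would invoke the pointwise identity~\eqref{modif}, namely $E(fX_i)=E(\delta_i f)$ for each $i\in\unn$, which itself was obtained from the exchange formula~\eqref{adva} built out of lemma~\ref{trle} and corollary~\ref{reeh}. Substituting, we get exactly equation~\eqref{smodif},
$$\sum_{i=1}^n E(f X_i)\,=\,\sum_{i=1}^n E(\delta_i f)\,.$$

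Finally I would push the sum inside the expectation on the left by linearity, recognize $\sum_{i=1}^n X_i=S_n$, and combine with the previous two displays to conclude
$$E(|\cP(f)|)\,=\,\sum_{i=1}^n E(\delta_i f)\,=\,\sum_{i=1}^n E(fX_i)\,=\,E\!\left(f\sum_{i=1}^n X_i\right)\,=\,E(S_n f)\,,$$
which is~\eqref{finii}. There is no genuine obstacle in this proposition: all the analytical work has already been carried out in lemma~\ref{trle} and its corollary, and the only thing to be careful about is that the monotonicity of $f$ is used precisely once, at the very first step, to identify $|\cP(f,\omega)|$ with the unsigned sum of discrete derivatives.
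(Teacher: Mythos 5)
Your proposal is correct and follows the paper's own proof step for step: equality~\eqref{inii} for the pivotal set as an unsigned sum of discrete derivatives (where monotonicity enters), expectation and linearity, the identity~\eqref{modif} obtained from lemma~\ref{trle} and corollary~\ref{reeh}, and finally recombining the sum into $S_n$. The one minor quibble is that calling~\eqref{modif} a ``pointwise'' identity is a slight misnomer, since it is an equality of expectations for each fixed index $i$ rather than a pointwise identity in $\omega$; the substance is unaffected.
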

\noindent
In fact, lemma~\ref{trle},
corollary~\ref{reeh} and proposition~\ref{snf} can be viewed
as simple consequences of the following elementary formulas.
Let $\omega(1)$ be a Bernoulli random variable with parameter~$p$:
\[
	P(\omega(1)=0)=1-p\,,\qquad
	P(\omega(1)=1)=p\,.
\]
Let $f$ be a function from $\{\,0,1\,\}$ to $\R$. We have
\begin{equation}
	\label{elef}
	f(1)-f(0)
	\,=\,
	E\Bigg(
		f(\omega(1))
\Big(\frac{\omega(1)}{p}- \frac{1-\omega(1)}{1-p}\Big)
\Bigg)\,.
		\end{equation}
To prove directly formula~\eqref{modif}, we compute
		$E\big(fX_i\big)$ with the help of
		Fubini's theorem. More precisely, 
		we fix the variables
$\omega(1),\dots,\omega(i-1),\omega(i+1),\dots,\omega(n)$ and we compute
		first the expectation with respect to
		$\omega(i)$. This amounts to take the conditional expectation of $f$ given
$\omega(1),\dots,\omega(i-1),\omega(i+1),\dots,\omega(n)$. 
From formula~\eqref{elef}, we have
\begin{multline}
	\label{condex}
	E\big(fX_i\,|\,
	\omega(1),\dots,\omega(i-1),\omega(i+1),\dots,\omega(n)
	\big)
	\,=\,\cr
	\delta_if\big(
	\omega(1),\dots,\omega(i-1),\cdot,\omega(i+1),\dots,\omega(n)
	\big)\,.
\end{multline}
Naturally, the resulting function does not depend any more on 
the variable $\omega(i)$. Taking the expectation of formula~\eqref{condex}, we obtain
formula~\eqref{modif}. To be frank, this is just another presentation of the computation done in the 
proof of lemma~\ref{trle}, which might look more natural for
readers acquainted with the conditional expectation, but which
is not as elementary.

\section{Completion of the proof of theorem~\ref{maine}}
We start from the formula of
proposition~\ref{snf}: 
\begin{equation}
		\label{ifinii}
		E\big( |\cP(f)| \big)\,=\,
		E\big( S_n f\big)\,=\,
		\sum_{\omega\in\Omega}S_n(\omega)f(\omega)P(\omega)\,.
\end{equation}
A straightforward application of 
the Cauchy--Schwarz inequality gives
	\begin{equation*}
		\label{mcs}
		\Big|
		\sum_{\omega\in\Omega}S_n(\omega)f(\omega)P(\omega)
		\Big|
		\,\leq\,
		\sqrt{
			\Big(
		\sum_{\omega\in\Omega}\big(S_n(\omega)\big)^2 P(\omega)
		\Big)
		\Big(
		\sum_{\omega\in\Omega}\big(f(\omega)\big)^2 P(\omega)
		\Big)
	}
		\,.
	\end{equation*}
This inequality can be rewritten in a more concise form with the help of
the expectation $E$ as
\begin{equation}
	\label{cso}
	\big| E\big( S_n f\big)\big|\,\leq\,
	\sqrt{
		E\big((S_n)^2\big)
	E\big((f)^2\big)}\,.
\end{equation}
The random variable $S_n$ is the sum of $n$ i.i.d. centered variables,
so that
	\begin{equation}
		\label{mr3f}
		E\big((S_n)^2\big)\,=\,
\sum_{1\leq i\leq n} E\big((X_i)^2\big)\,=\,
\frac{n}{p(1-p)}\,.
	\end{equation}
Putting together
\eqref{ifinii},
\eqref{cso}
and \eqref{mr3f}, we obtain the inequality
stated in theorem~\ref{maine}.
\section{The Margulis--Russo formula}
We state here an important consequence of proposition~\ref{snf}.
\begin{proposition}
	For any  non--decreasing boolean function $f$,
	we have
	\begin{equation}
		\label{mr1}
		\frac{d}{dp}E(f)\,=\,
		E\big(|\cP(f)|\big)\,.
	\end{equation}
\end{proposition}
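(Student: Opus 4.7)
The plan is to differentiate $E(f) = \sum_{\omega \in \Omega} f(\omega) P(\omega)$ with respect to $p$, observe that the derivative of $P(\omega)$ factors as $S_n(\omega) P(\omega)$, and then invoke proposition~\ref{snf} to conclude.

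More concretely, I would first note that $\Omega$ is finite, so differentiation and summation commute without any justification, and only the weights $P(\omega)$ depend on $p$. Writing
\[
P(\omega) \,=\, \prod_{1 \leq i \leq n} p^{\omega(i)}(1-p)^{1-\omega(i)},
\]
I would then compute, using the logarithmic derivative (or a direct check on the two cases $\omega(i)=0$ and $\omega(i)=1$), that
\[
\frac{d}{dp}\Big(p^{\omega(i)}(1-p)^{1-\omega(i)}\Big)
\,=\,
\Big(\frac{\omega(i)}{p}-\frac{1-\omega(i)}{1-p}\Big)
\,p^{\omega(i)}(1-p)^{1-\omega(i)}.
\]
The bracketed factor is exactly the random variable $X_i(\omega)$ defined in~\eqref{exprX}. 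Applying the product rule to $P(\omega)$, the $n$ resulting terms collapse to
\[
\frac{d}{dp} P(\omega) \,=\, \Big(\sum_{i=1}^n X_i(\omega)\Big) P(\omega) \,=\, S_n(\omega)\, P(\omega).
\]

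Plugging this back into the expression for the derivative of $E(f)$ gives
\[
\frac{d}{dp} E(f) \,=\, \sum_{\omega \in \Omega} f(\omega)\, S_n(\omega)\, P(\omega) \,=\, E(S_n f),
\]
and proposition~\ref{snf} identifies the right-hand side with $E(|\cP(f)|)$, yielding~\eqref{mr1}.

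There is no real obstacle here: the computation is elementary once one spots that $S_n$ arises naturally as the logarithmic derivative of the product measure. The only mildly delicate point is checking the formula for $\frac{d}{dp} P(\omega)$ cleanly, but this is exactly the same algebra that produced the trick in lemma~\ref{trle}, so all the work has already been done. Monotonicity of $f$ is not used in the differentiation step itself; it enters only implicitly through the appeal to proposition~\ref{snf}, which requires it to identify $E(S_n f)$ with the expected size of the pivotal set.
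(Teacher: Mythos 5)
Your proof is correct and follows essentially the same route as the paper: differentiate $E(f)=\sum_\omega f(\omega)P(\omega)$ term by term, identify $\frac{d}{dp}P(\omega)=S_n(\omega)P(\omega)$ via the logarithmic derivative, and appeal to proposition~\ref{snf} to convert $E(S_n f)$ into $E(|\cP(f)|)$. Your remark that monotonicity enters only through proposition~\ref{snf} is also accurate.
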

\noindent
The Margulis--Russo formula~\eqref{mr1} is very convenient
	to study the response of $E(f)$ to a slight
	variation of the parameter $p$, for instance to analyze threshold phenomena.
Once we have proposition~\ref{snf}, the proof is straightforward.
We take the derivative of $E(f)$ with respect to the parameter $p$:
\begin{equation*}
		\frac{d}{dp}E(f)
		\,=\,
		\frac{d}{dp}
		\sum_{\omega\in\Omega}f(\omega)\,P(\omega)
		\,=\,
		\sum_{\omega\in\Omega}f(\omega)
		\, 
		\frac{d}{dp}
		P(\omega)\,.
\end{equation*}
We differentiate $P(\omega)$ either as an $n$--fold product, or through 
its logarithmic derivative. Indeed, we have
\begin{equation*}
	\forall \omega\in\Omega\qquad
	\ln P(\omega)\,=\,
	\sum_{1\leq i\leq n}\Big(
		{\omega(i)}\ln p+
	({1-\omega(i)})\ln(1-p)\Big)\,.
\end{equation*}
Whichever way, we get,
for any $\omega\in\Omega$, 
\begin{equation}
	\label{deriv}
		\frac{d}{dp}P(\omega)
		\,=\,
	\sum_{1\leq i\leq n}
		X_i(\omega)
		\, P(\omega)
		\,=\,
		S_n(\omega)
		\, P(\omega)
		\,,
\end{equation}
whence
\begin{equation*}
		\frac{d}{dp}E(f)
		\,=\,
		E\big(fS_n\big)
		\,,
\end{equation*}
and formula~\eqref{mr1} follows immediately from
proposition~\ref{snf}.
\section{An alternative proof via Bessel's inequality}
Techniques coming from the Fourier analysis play a central role
in the study of boolean functions. 
We will show here how a stronger result than
theorem~\ref{maine}
can be derived from 
the classical Bessel inequality.
	We use the random variable $X_i$ defined in~\eqref{exprX} and 
	the formula~\eqref{modif}
	to write
\begin{equation} 
		\label{rmodif}
P\big(i \text{ is pivotal for $f$}\big)\,=\,
		E\big(fX_i\big)
		\,.
\end{equation}
We endow the space 
${\mathcal F}(\Omega,\R)$ 
of the functions from $\Omega$ to $\R$ with the
scalar product
\begin{equation*}
	(f,g)\in{\mathcal F}(\Omega,\R)\mapsto
	\langle f,g\rangle\,=\,
	\sum_{\omega\in\Omega}
	f(\omega)g(\omega) P(\omega)\,.
\end{equation*}
We denote by $||\cdot||_2$ the associated norm on
${\mathcal F}(\Omega,\R)$. 
This turns
${\mathcal F}(\Omega,\R)$ into a Hilbert space.
Moreover the $n$ random variables 
$X_1,\dots,X_n$ 
form an orthogonal family, more precisely we have
\begin{equation}
	\label{norm}
	\forall i,j\in\unn\qquad
	\langle X_i,X_j\rangle\,=\,
	\begin{cases}
	\phantom{p(1}\,0& \text{ if }i\neq j\,,\cr
\displaystyle\frac{1}{p(1-p)}
		& \text{ if }i= j\,.
	\end{cases}
\end{equation}
From this perspective, 
		the quantity $E\big(fX_i\big)$ is simply the
		scalar product
	$\langle f,X_i\rangle$ and a direct application of Bessel's inequality
	yields that
\begin{equation}
	\label{parse}
	\sum_{1\leq i\leq n}
\frac{\langle f,X_i\rangle^2}{\langle X_i,X_i\rangle^2}
	\,\leq\,
	\big(||f||_2\big)^2\,=\,
	E(f)
\,.
	\end{equation}
Combining the identities~\eqref{rmodif}, the
little computation~\eqref{norm} 
and inequality~\eqref{parse},
	we obtain the inequality stated in the next proposition.
\begin{proposition}
	For any  non--decreasing boolean function $f$,
	we have
	\begin{equation}
		\label{gr1}
	\sum_{1\leq i\leq n}
 \Big(P\big(i \text{ is pivotal for $f$}\big)\Big)^2\,\leq\,
		\frac{ E(f) }{p(1-p)}\,.
	\end{equation}
\end{proposition}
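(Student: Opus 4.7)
The plan is to recognize each $P(i \text{ is pivotal for } f)$ as a Fourier-type coefficient of $f$ and then invoke Bessel's inequality against the orthogonal family $(X_i)_{1\leq i\leq n}$. All three ingredients required are already in place. By~\eqref{rmodif} we have $P(i \text{ is pivotal for } f) = \langle f, X_i\rangle$, so the left-hand side of~\eqref{gr1} is simply $\sum_{i=1}^n \langle f, X_i\rangle^2$, the sum of squared scalar products of $f$ against the family $(X_i)$.

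Next, the orthogonality relations~\eqref{norm} assert that $\langle X_i, X_j\rangle = 0$ for $i \neq j$ while $\langle X_i, X_i\rangle = 1/(p(1-p))$. Consequently the rescaled family $\bigl(\sqrt{p(1-p)}\, X_i\bigr)_{1\leq i\leq n}$ is orthonormal in $\mathcal{F}(\Omega, \R)$, and Bessel's inequality applied to $f$ against this orthonormal system yields
$$
p(1-p) \sum_{i=1}^n \langle f, X_i\rangle^2 \,\leq\, \|f\|_2^2.
$$
To finish, I would use the key Boolean-specific fact that $f^2 = f$, so that $\|f\|_2^2 = E(f^2) = E(f)$. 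Dividing by $p(1-p)$ and substituting $\langle f, X_i\rangle = P(i \text{ is pivotal for } f)$ yields exactly~\eqref{gr1}.

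There is no genuine obstacle once the preparation in the previous sections is granted: the substantive work lies in the identity~\eqref{modif}, which expresses the influences as scalar products, and in the orthogonality computation~\eqref{norm}. The only step worth emphasising is the collapse $f^2 = f$, which is what tailors the bound to Boolean functions and turns the right-hand side of Bessel's inequality into $E(f)$ rather than $E(f^2)$; for a general bounded $f$ the resulting estimate would be strictly weaker.
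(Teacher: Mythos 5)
Your proof is correct and follows the same route as the paper: express the influence as $\langle f, X_i\rangle$ via~\eqref{rmodif}, exploit the orthogonality relations~\eqref{norm}, apply Bessel's inequality, and use $f^2=f$ to turn $\|f\|_2^2$ into $E(f)$. The only difference is cosmetic: you normalize to the orthonormal family $\bigl(\sqrt{p(1-p)}\,X_i\bigr)$ before invoking Bessel, which incidentally sidesteps a small exponent slip in the paper's display~\eqref{parse}, where the denominator should be $\langle X_i,X_i\rangle$ rather than $\langle X_i,X_i\rangle^2$.
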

\noindent
This last inequality~\eqref{gr1} is in fact stronger than the inequality~\eqref{bpf}.
Indeed, it follows from 
the Cauchy--Schwarz inequality that
	\begin{equation*}
		\label{icfp}
		E\big(|\cP(f)|\big)
		\,=\,
	\sum_{1\leq i\leq n}
P\big(i \text{ is pivotal for $f$}\big)
\,\leq\,
\sqrt{n
	\sum_{1\leq i\leq n}
 \Big(P\big(i \text{ is pivotal for $f$}\big)\Big)^2}\,,
	\end{equation*}
and using~\eqref{gr1} to bound the last term, we recover 
the ine\-qua\-li\-ty~\eqref{bpf}.
\section{The conditional expectation 
of $|\cP(f)|$}
\label{seco}
We will be interested in controlling the cardinality of the 
pivotal set for $f$
whenever the function $f$ is equal to~$1$.
Our first step in this direction consists in derivating a formula
involving the conditional expectation
		$E\big(|\cP(f)|\,\big|\,f=1\big)$. 
We focus again
on the specific situation where $f$ is non--decreasing. 
In this case,
whenever a coordinate $i$ is pivotal for $f$,
the value of $f$ is equal to $\omega(i)$.
Moreover
		the status of the coordinate $i$ itself is independent of the fact that $i$ is pivotal
		or not, therefore
		\begin{equation}
			P\big(i\in \cP(f),\,f=1\big)\,=\,
			P\big(i\in \cP(f),\, \omega(i)=1\big)
					      \,=\,
			      P\big(i\in \cP(f)\big)\,p\,,
	\end{equation}
which we rewrite as
\begin{equation}
	\label{rw1}
      P\big(\,i\in \cP(f)\,\big)\,=\,
      \frac{1}{p}
			P\big(i\in \cP(f),\,f=1\big)\,.
\end{equation}
Summing equation~\eqref{rw1} over $i$, and putting the summation
inside the expectation, we obtain
\begin{equation}
\label{mr7}
		E\big(|\cP(f)|\big)
		\,=\,
      \frac{1}{p}
      E\big(|\cP(f)|\,{{f}}\big)
		\,.
	\end{equation}
	The identities~\eqref{finii} and~\eqref{mr7} yield
\begin{equation}
\label{mr8}
		E\big(S_nf\big)
		\,=\,
      \frac{1}{p}
      E\big(|\cP(f)|\,{{f}}\big)
		\,.
	\end{equation}
Dividing finally by $P(f=1)$, we obtain the formula stated in the next lemma.
\begin{lemma} 
	\label{ral}
	For any  non--decreasing boolean function $f$,
	we have
	\begin{equation}
\label{mru}
		E\big(S_n\,\big|\,f=1\big)
		\,=\,
      \frac{1}{p}
		E\big(|\cP(f)|\,\big|\,f=1\big)
		\,.
	\end{equation}
\end{lemma}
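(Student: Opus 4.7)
The plan is to reduce the conditional identity to the unconditional identity $E(S_n f) = \frac{1}{p} E(|\cP(f)|\, f)$, and then to establish the latter by a site-by-site argument. Since $f$ is Boolean, $P(f=1) = E(f)$ and for any random variable $Y$ we have $E(Y \mid f=1) = E(Yf)/E(f)$. So, dividing the target identity~\eqref{mru} through by $E(f)$, it is enough to prove
\begin{equation*}
E\bigl(S_n f\bigr) \,=\, \frac{1}{p}\, E\bigl(|\cP(f)|\, f\bigr).
\end{equation*}
The left-hand side equals $E(|\cP(f)|)$ by proposition~\ref{snf}, so the whole task boils down to showing
\begin{equation*}
E\bigl(|\cP(f)|\bigr) \,=\, \frac{1}{p}\, E\bigl(|\cP(f)|\, f\bigr).
\end{equation*}

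To do this, I would expand $|\cP(f)|$ as a sum of indicators $\mathbf{1}_{i\in\cP(f)}$ and prove the site-wise statement
\begin{equation*}
P\bigl(i \in \cP(f)\bigr) \,=\, \frac{1}{p}\, P\bigl(i \in \cP(f),\, f=1\bigr)
\end{equation*}
for each $i \in \unn$. The key observation is twofold. First, the event $\{i \in \cP(f)\}$ is measurable with respect to $(\omega(j))_{j\neq i}$ alone, since $\delta_i f(\omega) = f(\omega^i) - f(\omega_i)$ does not depend on $\omega(i)$; in particular this event is independent of $\omega(i)$. Second, since $f$ is non-decreasing, whenever $i$ is pivotal we have $f(\omega_i)=0$ and $f(\omega^i)=1$, hence $f(\omega) = \omega(i)$. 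Consequently, on the event $\{i \in \cP(f)\}$ the event $\{f=1\}$ coincides with $\{\omega(i)=1\}$, and by independence
\begin{equation*}
P\bigl(i\in\cP(f),\,f=1\bigr) \,=\, P\bigl(i\in\cP(f),\,\omega(i)=1\bigr) \,=\, p\, P\bigl(i\in\cP(f)\bigr).
\end{equation*}
Summing over $i$ and inserting the sums inside the expectations gives the desired unconditional identity, which combined with proposition~\ref{snf} and division by $E(f)$ yields~\eqref{mru}.

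The only conceptual step is recognising the decoupling between the pivotality event and the value of $\omega(i)$; everything else is bookkeeping. This is a genuinely elementary argument and I would expect no serious obstacle, though it is worth stressing that monotonicity of $f$ is used in an essential way to identify $f$ with $\omega(i)$ on the pivotal event, without which the clean factor of $1/p$ would not appear.
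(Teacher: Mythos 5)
Your proof is correct and follows essentially the same route as the paper's: the site-wise identity $P(i\in\cP(f),\,f=1)=p\,P(i\in\cP(f))$, obtained from the independence of the pivotality event from $\omega(i)$ together with $f=\omega(i)$ on that event, followed by summation over $i$, an appeal to proposition~\ref{snf}, and division by $P(f=1)$. The only cosmetic difference is that you work backwards from the target identity while the paper builds it up forward, and you are slightly more explicit about why the pivotality event is independent of $\omega(i)$.
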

\noindent
The previous formula~\eqref{mru} is very interesting. Indeed, it tells us that,
whenever the boolean function $f$ takes the value $1$, the expected size
of the pivotal set coincides with the conditional expectation of $S_n$ knowing that $f=1$. 
Up to an affine rescaling,
the sum $S_n$ is a
binomial random variable and probabilists know very well how to control the
binomial distribution in various regimes.
Ultimately, we would like to link the size of the pivotal set 
		$\cP(f)$ to the probability~$P(f=1)$.
Our strategy to do so is the following. Suppose that 
the righthand side of~\eqref{mru} is large.
This forces
that the conditional law of $S_n$ knowing that $f=1$ is concentrated on subsets
of $\Omega$ where $S_n$ is large, but these sets have small probability,
so for the ratio 
${E(S_nf)}/{P(f=1)}$ 
to be large, 
the event $f=1$ must also have
a small
probability.
\section{Hoeffding's inequality}
The key tool to complete the previous program is the classical Hoeffding inequality~\cite{HOE}.
The general version of the inequality deals with sums of independent bounded variables.
We restate next the inequality for the specific case which concerns us, namely the
sum $S_n$.
Recall that $S_n$
is the random sum 
$S_n\,=\,X_1+\cdots+X_n $, where the variables $X_1,\dots,X_n$ are i.i.d. with
distribution given by
$$\forall i\in\unn\qquad 
P\Big(X_i=\frac{1}{p}\Big)\,=\,p\,,\qquad
P\Big(X_i=-\frac{1}{1-p}\Big)\,=\,1-p\,.$$
We provide a detailed proof of Hoeffding's inequality in this case.
The proof is short (one page) and elementary, but it is quite tricky. 
\begin{proposition} For any $n\geq 1$ and any $p\in [0,1]$, we have
\begin{equation}
	\label{hoef}
	\forall u>0\qquad P\big(|S_n|\geq u\big)\,\leq\,2\exp\Big(-
	\frac{{2p^2(1-p)^2}{}u^2}{n}
\Big)\,.
\end{equation}
\end{proposition}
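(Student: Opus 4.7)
\medskip
\noindent
\textbf{Proof proposal.}
The plan is to follow the classical Chernoff--Hoeffding strategy. First observe that each $X_i$ is centered: $E(X_i) = p \cdot \frac{1}{p} + (1-p)\cdot\bigl(-\frac{1}{1-p}\bigr) = 0$, and bounded, taking values in the interval $[-\frac{1}{1-p},\frac{1}{p}]$ of length $\frac{1}{p(1-p)}$. For any $\lambda>0$, the exponential Markov inequality combined with the independence of the $X_i$ gives
\begin{equation*}
	P(S_n\geq u)\,\leq\,e^{-\lambda u}\,E\bigl(e^{\lambda S_n}\bigr)
	\,=\,e^{-\lambda u}\,\prod_{i=1}^n E\bigl(e^{\lambda X_i}\bigr)\,.
\end{equation*}

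Next I would establish the following variant of Hoeffding's lemma, which is where the work lies: if $X$ is a real random variable with $E(X)=0$ and $X\in[a,b]$ almost surely, then
\begin{equation*}
	E\bigl(e^{\lambda X}\bigr)\,\leq\,\exp\!\Bigl(\frac{\lambda^2(b-a)^2}{8}\Bigr)\,.
\end{equation*}
The standard route is to use the convexity of $x\mapsto e^{\lambda x}$ to write, for $x\in[a,b]$,
\begin{equation*}
	e^{\lambda x}\,\leq\,\frac{b-x}{b-a}\,e^{\lambda a}+\frac{x-a}{b-a}\,e^{\lambda b}\,,
\end{equation*}
take expectations using $E(X)=0$, and then study the function
\begin{equation*}
	\phi(\lambda)\,=\,\ln\!\Bigl(\frac{b}{b-a}\,e^{\lambda a}-\frac{a}{b-a}\,e^{\lambda b}\Bigr)\,.
\end{equation*}
A direct computation shows $\phi(0)=\phi'(0)=0$, and recognizing $\phi''(\lambda)$ as a variance of a random variable supported in $[a,b]$ gives $\phi''(\lambda)\leq (b-a)^2/4$. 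A second-order Taylor expansion then yields $\phi(\lambda)\leq \lambda^2(b-a)^2/8$, which is the claimed bound. This bounding of $\phi''$ is the main obstacle, as it requires recognizing the second derivative as a variance and then applying the elementary inequality $\mathrm{Var}(Y)\leq (b-a)^2/4$ for $Y\in[a,b]$.

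Applying the lemma with $a=-\frac{1}{1-p}$ and $b=\frac{1}{p}$, so that $(b-a)^2=\frac{1}{p^2(1-p)^2}$, gives
\begin{equation*}
	E\bigl(e^{\lambda S_n}\bigr)\,\leq\,\exp\!\Bigl(\frac{n\lambda^2}{8p^2(1-p)^2}\Bigr)\,,
\end{equation*}
whence
\begin{equation*}
	P(S_n\geq u)\,\leq\,\exp\!\Bigl(-\lambda u+\frac{n\lambda^2}{8p^2(1-p)^2}\Bigr)\,.
\end{equation*}
Optimizing the right-hand side by choosing $\lambda=4up^2(1-p)^2/n$ yields $P(S_n\geq u)\leq\exp\bigl(-2p^2(1-p)^2u^2/n\bigr)$. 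The same bound for $P(S_n\leq -u)$ follows either by symmetry, applying the same argument to the variables $-X_i$ (which are no longer identically distributed but still centered and bounded in an interval of the same length), or equivalently by exchanging the roles of $p$ and $1-p$. A union bound combining the two tails delivers the factor $2$ and hence the stated inequality~\eqref{hoef}.
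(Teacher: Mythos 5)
Your proof is correct, and it in fact lands exactly on the stated constant, which is a real difference from the paper's own argument. The paper follows the same Chernoff skeleton (exponential Markov plus $E(e^{\lambda S_n})=\phi(\lambda)^n$), but it bounds the moment generating function $\phi(\lambda)=p\,e^{\lambda/p}+(1-p)\,e^{-\lambda/(1-p)}$ by a symmetrization trick: since $\phi\geq 1$ by Jensen, $\phi(\lambda)\leq\phi(\lambda)\phi(-\lambda)$, and a short computation reduces the right-hand side to $\cosh\bigl(\lambda/(p(1-p))\bigr)\leq\exp\bigl(\lambda^2/(2p^2(1-p)^2)\bigr)$. This is more elementary than the route you take — no differentiation of $\ln\phi$, no variance identification — but it yields only $P(S_n\geq u)\leq\exp\bigl(-p^2(1-p)^2u^2/(2n)\bigr)$, with the $2$ in the denominator; the paper explicitly admits this is suboptimal relative to what was stated in the proposition and merely remarks that a refinement would recover the sharp constant. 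Your approach, invoking the general Hoeffding lemma $E(e^{\lambda X})\leq\exp\bigl(\lambda^2(b-a)^2/8\bigr)$ via the convexity-plus-Taylor argument on $\phi(\lambda)=\ln\bigl(\frac{b}{b-a}e^{\lambda a}-\frac{a}{b-a}e^{\lambda b}\bigr)$ and the bound $\phi''\leq(b-a)^2/4$, does require one extra idea (seeing $\phi''$ as a variance) but delivers $\exp\bigl(-2p^2(1-p)^2u^2/n\bigr)$ directly, exactly as claimed. The union bound on the two tails and the handling of $P(S_n\leq -u)$ by swapping $p\leftrightarrow 1-p$ (or passing to $-X_i$, still centered and supported in an interval of the same length $1/(p(1-p))$) are the same in both arguments. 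In short: the paper trades sharpness for elementarity; you pay a bit more and collect the stated constant.
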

\begin{proof}
Let $u>0$ be fixed and let $\lambda>0$ be an additional parameter, to be chosen later.
We write
\begin{multline} P\big(S_n\geq u\big)\,=\,
	\label{mark}
	P\big(\lambda S_n\geq\lambda u\big)
	\,
	=\,
	P\big(\exp\big(\lambda S_n\big)\geq\exp(\lambda u)\big)\cr
	\,\leq\,
	\exp(-\lambda u)
	E\big(\exp\big(\lambda S_n\big)\big)\,,
\end{multline}
where we have applied the Markov inequality in the last step.
Using the fact that the variables $X_i$ are i.i.d., we compute then
\begin{equation}
	\label{compu}
	E\big(\exp\big(\lambda S_n\big)\big)\,=\,
	\prod_{1\leq i\leq n}E\big(\exp\big(\lambda X_i\big)\big)
		\,=\,\phi(\lambda)^n
		\,,
	\end{equation}
where we have set $\phi(\lambda)=
E\big(\exp\big(\lambda X_1\big)\big)$.
The explicit expression of $\phi(\lambda)$ is
\begin{equation*}
\phi(\lambda)\,=\,
	E\big(\exp\big(\lambda X_1\big)\big)\,=\,
		p \exp\big(\lambda /p\big)\,+\,
		(1-p) \exp\big(-\lambda /(1-p)\big)\,.
\end{equation*}
Using the convexity of the exponential, we see that 
$\phi(\lambda)\,\geq\,1$ for any $\lambda\in\mathbb R$.
We take advantage of this inequality to perform a little symmetrization
trick:
for any $\lambda>0$, we have
\begin{equation}
	\label{crux}
\phi(\lambda)\,\leq\,
\phi(\lambda) \phi(-\lambda)
\,=\,p^2+
		2p (1-p) 
		\cosh\Big(\frac{\lambda }{p(1-p)}\Big)
		+(1-p)^2	\,.
\end{equation}
The function $\cosh$ 
is even and satisfies 
$1\leq\cosh(x)\leq\exp(x^2/2)$, 
so from the inequality~\eqref{crux}
we deduce that
\begin{equation}
\phi(\lambda)\,\leq\,
		\cosh\Big(\frac{\lambda }{p(1-p)}\Big)
\,\leq\,
		\exp\Big(\frac{\lambda^2 }{2p^2(1-p)^2}\Big)\,.
\end{equation}
Substituting this inequality back in~\eqref{compu} and~\eqref{mark}, 
we obtain
\begin{equation} 
	P\big(S_n\geq u\big)\,\leq\,
		\exp\Big(-\lambda u+\frac{n\lambda^2 }{2p^2(1-p)^2}\Big)\,.
\end{equation} 
It remains to choose the optimal value for $\lambda$. Obviously,
we obtain the best inequality 
with $\lambda= {up^2(1-p)^2}/n$, i.e.,
\begin{equation} 
	\label{subopt}
\forall u >0\qquad
	P\big(S_n\geq u\big)\,\leq\,
		\exp\Big(-\frac{p^2(1-p)^2u^2 }{2n}\Big)\,.
\end{equation}
		The same inequality holds for
	$P\big(S_n\leq -u\big)$ (it suffices to change $p$ into
	$1-p$ and to apply the previous inequality to $-S_n$).
	The sum of these two inequalities
	yields the inequality~\eqref{hoef}
	and creates the factor~$2$ there. 
	To be honest, our proof
yields a suboptimal constant in the exponential: the $2$
comes in the denominator in formula~\eqref{subopt}. 
A little refinement in the
above chain of inequalities would give 
	the inequality~\eqref{hoef} with the $2$
	in the numerator.
\end{proof}
\section{An exponential inequality}
With the Hoeffding inequality in the hand, we shall complete the program described at the
end of section~\ref{seco} and we shall prove the following general exponential inequality,
which does not even require that the boolean function~$f$ is monotone.
\begin{proposition}
	\label{bth}
	For any boolean function $f$ defined on 
	$\{\,0,1\,\}^n$,
	we have
	\begin{equation}
		\label{vyin}
	\forall p\in [0,1]\qquad
	 P(f=1) \,\leq\,
2\exp\Big(
-\frac{ 1}{2n}
	\Big( 
		p(1-p) 
		E\big(S_n\,\big|\,f=1\big)
\Big)^2\Big)\,.
	\end{equation}
\end{proposition}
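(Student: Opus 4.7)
My plan is a Chernoff-type argument applied to the conditional law of $S_n$ given $\{f=1\}$, combined with the moment generating function bound already obtained in the proof of Hoeffding's inequality~\eqref{hoef}. Monotonicity of~$f$ plays no role here, in agreement with the generality of the statement. Setting $v=E(S_n\mid f=1)$, the task is to bound $P(f=1)$ in terms of~$v$.

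First, for every $\lambda\in\R$, I would apply Jensen's inequality (convexity of $x\mapsto e^{\lambda x}$) to the conditional law of $S_n$ given $\{f=1\}$ to obtain $e^{\lambda v}\le E(e^{\lambda S_n}\mid f=1)$. Multiplying by $P(f=1)$ and bounding $f\le 1$ yields the key inequality
\[
P(f=1)\,e^{\lambda v} \,\le\, E\!\bigl(e^{\lambda S_n}\,f\bigr) \,\le\, E\bigl(e^{\lambda S_n}\bigr) \,=\, \phi(\lambda)^n,
\]
and the estimate $\phi(\lambda)^n\le \exp\!\bigl(n\lambda^2/(2p^2(1-p)^2)\bigr)$ already available from the proof of~\eqref{hoef} gives the one-parameter family
\[
P(f=1)\,\le\, \exp\!\Bigl(-\lambda v + \frac{n\lambda^2}{2p^2(1-p)^2}\Bigr), \qquad \lambda\in\R.
\]

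It remains to optimize over $\lambda$. The minimum is attained at $\lambda=v\,p^2(1-p)^2/n$, whose sign matches that of $v$, and produces $P(f=1)\le \exp\!\bigl(-p^2(1-p)^2 v^2/(2n)\bigr)$. This is slightly stronger than the stated inequality~\eqref{vyin}: the factor $2$ on its right-hand side is cosmetic slack, perhaps chosen to echo the two-sided Hoeffding bound. The degenerate cases $v=0$ and $P(f=1)=0$ are trivial. I do not expect a serious obstacle: structurally this is simply Hoeffding's argument run on the conditional law of $S_n$, and the only point to keep in mind is that $\lambda$ must be allowed to take either sign so that the cases $v>0$ and $v<0$ are handled uniformly, which is fine since both Jensen and the MGF bound hold for every real $\lambda$.
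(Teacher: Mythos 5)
Your proof is correct, and it takes a genuinely different route from the paper's. The paper bounds $E(|S_n|\mid f=1)$ by writing it as a tail integral, splitting that integral at a threshold $t$, invoking the two-sided Hoeffding tail bound~\eqref{hoef} on the far part, and then optimizing over $t$; it finishes by discarding the absolute value via $|E(S_n\mid f=1)|\le E(|S_n|\mid f=1)$. You instead run a Chernoff argument directly on the conditional law: Jensen gives $e^{\lambda v}\le E(e^{\lambda S_n}\mid f=1)$ with $v=E(S_n\mid f=1)$, the trivial bound $f\le 1$ gives $P(f=1)e^{\lambda v}\le E(e^{\lambda S_n})=\phi(\lambda)^n$, and the sub-Gaussian MGF estimate $\phi(\lambda)\le\exp\big(\lambda^2/(2p^2(1-p)^2)\big)$ already established inside the paper's proof of~\eqref{hoef} (which, as you rightly note, is valid for all real $\lambda$ because $\phi(\lambda)\phi(-\lambda)$ is even and $\phi\ge 1$) closes the argument after optimizing over $\lambda\in\R$. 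Your route is shorter, avoids the Fubini/tail-integral manipulation and the truncation parameter entirely, and in fact produces the strictly sharper bound $P(f=1)\le\exp\big(-p^2(1-p)^2v^2/(2n)\big)$ without the prefactor $2$, since you never pass through the two-sided bound on $|S_n|$. What the paper's route buys in exchange is that it reuses the \emph{final} Hoeffding statement~\eqref{hoef} as a black box, whereas yours needs to reach into the proof for the MGF bound; both ways the content is the same sub-Gaussian estimate, so the difference is organizational rather than substantive.
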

\begin{proof}
	We start with
\begin{equation}
\label{ci}
		E\big(|S_n|\,\big|\,f=1\big)
		\,=\,
		\frac{1}{P(f=1)}
		\int_{\{\,f=1\,\}}
		|S_n|\,dP\,.
	\end{equation}
	Using Fubini's theorem, 
we rewrite the integral in~\eqref{ci} as follows:
\begin{equation*}
	\label{wboo}
		\int_{\{\,f=1\,\}}
	|S_n|\,dP
	\,=\,
		\int_\Omega 
	\bigg(
	\int_{0}^{+\infty}1_{u\leq |S_n|}\,du
	\bigg)f
	\,dP
	\,=\,
	\int_{0}^{+\infty}
	\kern-7pt
	P\big(|S_n|\geq u,f=1\big)\,
	du\,.
\end{equation*}
Let $t>0$ and let us split this integral in two:
\begin{align}
	\label{wbfo}
	\int_{0}^{+\infty}
	\kern-7pt
	P\big(|S_n|\geq u,f=1\big)\,
	du
	&\,=\,
	\int_{0}^{t}
	\cdots
	+
	\int_{t}^{+\infty}
	\kern-7pt
	\cdots\cr
	&\,\leq\,
	tP(f=1)+
	\int_{t}^{+\infty}
	\kern-7pt
	P\big(|S_n|\geq u\big)\,
	du\,.
\end{align}
Reporting~\eqref{wbfo} in~\eqref{ci}, we have
\begin{equation}
\label{cin}
		E\big(|S_n|\,\big|\,f=1\big)
		\,\leq\,
      \frac{1}{P(f=1)}
	\int_{t}^{+\infty}
	P\big(|S_n|\geq u\big)\,
	du\,+\,t\,.
\end{equation}
Now $S_n$ is the sum of $n$ i.i.d. centered random variables $X_1,\dots,X_n$
satisfying
\begin{equation*}
\forall i\in\unn\qquad -\frac{1}{1-p}\,\leq\,X_i\,\leq\,\frac{1}{p}\,.\end{equation*}
Substituting
Hoeffding's inequality~\eqref{hoef}
into~\eqref{cin}, we obtain
\begin{equation}
\label{gci}
		E\big(|S_n|\,\big|\,f=1\big)
		\,\leq\,
      \frac{2}{P(f=1)}
	\int_{t}^{+\infty}
	\exp\Big(- \frac{{2p^2(1-p)^2}{}u^2}{n}\Big)
	\,du\,+\,t\,.
	\end{equation}
We make the change of variable 
$ 2p(1-p)u= \sqrt{n}v $, so that~\eqref{gci} becomes
\begin{equation}
\label{cgci}
		E\big(|S_n|\,\big|\,f=1\big)
		\,\leq\,
		\frac{\sqrt{n}}{p(1-p)P(f=1)}
	\int_{
		\frac{2p(1-p)t}{\sqrt{n}}
	}^{+\infty}
	\exp\Big(- \frac{v^2}{2}\Big)
	\,dv\,+\,t\,.
	\end{equation}
Recalling that this inequality holds for any $t>0$,
we replace $t$ by 
$\frac{\sqrt{n}t}{2p(1-p)}$ and we get
\begin{equation}
\label{hci}
		E\big(|S_n|\,\big|\,f=1\big)
		\,\leq\,
\frac{\sqrt{n}}{p(1-p)}
		\bigg(
			\frac{1}{P(f=1)}
	\int_{t}^{+\infty}
	\exp\Big(- \frac{v^2}{2}\Big)
\,dv\,+\,\frac{t}{2}\bigg)\,.
	\end{equation}
The righthand quantity admits a unique global minimum at
the point 
	$t^*=\sqrt{ {2} \ln ({2}/{P(f=1)}) }$
and we conclude that
\begin{equation}
\label{fci}
		E\big(|S_n|\,\big|\,f=1\big)
		\,\leq\,
\frac{1 }{p(1-p)}
	\sqrt{ {2n} \ln \frac{2}{P(f=1)} }
		\,.
	\end{equation}
We rewrite this inequality in exponential form as
\begin{equation}
\label{zfci}
	 P(f=1) \,\leq\,
2\exp\Big(
-\frac{ p^2(1-p)^2}{2n}
	\Big( 
		E\big(|S_n|\,\big|\,f=1\big)
\Big)^2\Big)\,.
	\end{equation}
We use finally the fact that
	\begin{equation}
		\Big| E\big(S_n\,\big|\,f=1\big) \Big|
		\,\leq\,
		 E\big(|S_n|\,\big|\,f=1\big)\,,
	\end{equation}
to obtain the inequality~\eqref{vyin} of the proposition.
\end{proof}
\noindent
In the case of a 
non--decreasing boolean function $f$, 
thanks to lemma~\ref{ral},
we can rewrite 
		$pE\big(S_n\,\big|\,f=1\big)$ as
		$E\big(|\cP(f)|\,\big|\,f=1\big)$ and we get 
		immediately the following corollary.
\begin{corollary} 
\label{imme}
	For any 
non--decreasing boolean function $f$ 
	defined on 
	$\{\,0,1\,\}^n$, we have
\begin{equation}
		\label{myin}
	\forall p\in [0,1]\qquad
	 P(f=1) \,\leq\,
2\exp\Big(
-\frac{ 1}{2n}
	\Big( 
		(1-p) 
		E\big(|\cP(f)|\,\big|\,f=1\big)
\Big)^2\Big)\,.
\end{equation}
\end{corollary}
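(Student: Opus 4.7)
The plan is straightforward: the corollary is essentially a repackaging of Proposition~\ref{bth} using the identity provided by Lemma~\ref{ral}. The strategy is to simply substitute the conditional expectation of $|\cP(f)|$ in place of the conditional expectation of $S_n$ in the exponential inequality.

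First, I would recall the statement of Proposition~\ref{bth}: for any boolean function $f$ defined on $\{0,1\}^n$ and any $p\in[0,1]$,
\begin{equation*}
P(f=1) \,\leq\, 2\exp\Big(-\frac{1}{2n}\big(p(1-p) E(S_n\,|\,f=1)\big)^2\Big)\,.
\end{equation*}
Since the corollary concerns non--decreasing boolean functions, Lemma~\ref{ral} applies and gives the identity
\begin{equation*}
E\big(S_n\,\big|\,f=1\big) \,=\, \frac{1}{p} E\big(|\cP(f)|\,\big|\,f=1\big)\,.
\end{equation*}

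Next, I would multiply both sides by $p(1-p)$ to obtain
\begin{equation*}
p(1-p)E\big(S_n\,\big|\,f=1\big) \,=\, (1-p) E\big(|\cP(f)|\,\big|\,f=1\big)\,,
\end{equation*}
so the factor of $p$ cancels cleanly. Substituting this directly into the exponent in the bound of Proposition~\ref{bth} yields exactly the inequality~\eqref{myin}.

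There is no real obstacle to overcome here, since both ingredients have been established: Proposition~\ref{bth} was proved using Hoeffding's inequality on $S_n$, and Lemma~\ref{ral} exploits the specific structure of non--decreasing boolean functions (namely, that pivotality of coordinate $i$ is independent of $\omega(i)$, and that $f$ agrees with $\omega(i)$ on the pivotal event). The only point worth noting is that the cancellation of $p$ in the product $p(1-p)\cdot (1/p)$ requires $p>0$; for $p=0$, both sides of~\eqref{myin} behave trivially (either $P(f=1)=0$ when $f(\underline{0})=0$ or the conditional expectation is well-defined on a single configuration), so the stated range $p\in[0,1]$ is justified by continuity or by direct inspection at the endpoint.
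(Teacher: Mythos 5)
Your proof is correct and follows exactly the paper's own route: substitute the identity $pE(S_n\mid f=1)=E(|\cP(f)|\mid f=1)$ from Lemma~\ref{ral} into the exponent of Proposition~\ref{bth}, with the $p$ cancelling against the factor $p(1-p)$. The extra remark on the endpoint $p=0$ is a minor but reasonable clarification that the paper leaves implicit.
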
 
\noindent
This is a deviations inequality: it tells us that, if the conditional expectation of the cardinality
of the pivotal set is larger than $\sqrt{n}$, then it becomes extremely unlikely that
the function $f$ takes the value $1$.
\section{Following Talagrand's footsteps}
In fact, the technique employed in the proof of proposition~\ref{bth}
to bound the expectation
		$E\big(|S_n|\,\big|\,A\big)$ has been
		used routinely in various contexts
		by Talagrand. It is presented in his
most recent book (see lemma~2.3.2 in \cite{UBT}). 
Notably, Talagrand used this technique to derive 
a beautiful inequality
on the correlation on increasing sets in his influential work
\cite{INCT}. However, only the very beginning of the proof 
of the correlation inequality involves this technique,
i.e., the first step of the proof of Proposition~$2.2$
in~\cite{INCT}. This kind of upper bound is such a routine
for Talagrand that the detailed proof of 
our proposition~\ref{bth}
corresponds essentially
to $6$ lines in~\cite{INCT}!
So, what is the point of the previous computations?
A major difference with~\cite{INCT} is that we deal here with the
Bernoulli product measure of parameter~$p$, 
whereas Talagrand was considering the uniform case $p=1/2$.
So the subgaussian inequality of the uniform case is replaced
here by Hoeffding's inequality.
Another difference is that the constants in Talagrand's inequality are not explicit
(Talagrand did not care about their values),
whereas we do provide explicit values here.
Let us proceed a bit further along the computations of Talagrand.
We implement now the second step of 
Proposition~$2.2$ in~\cite{INCT} in 
our biased context and we see where it leads.
	We first improve
	proposition~\ref{bth} to get the following 
	deviations inequality.
\begin{proposition}
	\label{rth}
	For any  boolean function $f$,
	we have
	\begin{equation}
		\label{tmyin}
	 P(f=1)\,\leq\,
2\exp\Big(
-\frac{ 1}{2}
	\sum_{1\leq i\leq n}
	\Big(
		p(1-p)	
	E\big(X_if \,\big|\,f=1\big)
\Big)^2\Big)\,.
\end{equation}
\end{proposition}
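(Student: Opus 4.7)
The plan is to generalize Proposition~\ref{bth} by replacing the single sum $S_n$ with a weighted linear combination
$T_c = \sum_{i=1}^n c_i X_i$ for an arbitrary vector
$c = (c_1,\dots,c_n) \in \R^n$, and then to optimize the choice of $c$ at the very end.

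First, I would extend Hoeffding's inequality~\eqref{hoef} to $T_c$. Since the $X_i$ are i.i.d. and centered, the moment-generating-function computation in the proof of~\eqref{hoef} applied to $\lambda c_i X_i$ instead of $\lambda X_i$ immediately yields
\begin{equation*}
E\big(\exp(\lambda T_c)\big) \,=\, \prod_{i=1}^n \phi(\lambda c_i)
\,\leq\, \exp\bigg(\frac{\lambda^2 \|c\|^2}{2\, p^2(1-p)^2}\bigg),
\end{equation*}
where $\|c\|^2 = c_1^2+\cdots+c_n^2$. Optimizing in $\lambda$ and applying Markov's inequality exactly as in the paper's proof delivers the weighted Hoeffding bound
\begin{equation*}
\forall u>0 \qquad
P\big(|T_c|\geq u\big) \,\leq\, 2\exp\bigg(-\frac{2\, p^2(1-p)^2 u^2}{\|c\|^2}\bigg).
\end{equation*}

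Second, I would reproduce the proof of Proposition~\ref{bth} line by line, with $T_c$ in the role of $S_n$ and with $\|c\|^2$ in the role of $n$. The integral representation of $E(|T_c|\mid f=1)\,P(f=1)$, the splitting at a threshold $t$, the same change of variables, and the optimization in $t$ are all unchanged, and produce
\begin{equation*}
P(f=1) \,\leq\, 2\exp\bigg(-\frac{1}{2\|c\|^2}\big(p(1-p)\,E(T_c\mid f=1)\big)^2\bigg).
\end{equation*}

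Third, I would make the optimal choice of $c$. Since $f$ is boolean-valued, on the event $\{f=1\}$ one has $X_i f = X_i$, and hence $E(X_i f\mid f=1) = E(X_i\mid f=1)$. Taking $c_i = E(X_i f\mid f=1)$ therefore gives
\begin{equation*}
E(T_c\mid f=1) \,=\, \sum_{i=1}^n c_i\, E(X_i\mid f=1) \,=\, \sum_{i=1}^n c_i^2 \,=\, \|c\|^2,
\end{equation*}
so $\big(E(T_c\mid f=1)\big)^2\!/\|c\|^2 = \|c\|^2 = \sum_i E(X_i f\mid f=1)^2$, and substituting back gives exactly inequality~\eqref{tmyin}. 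The calculations are essentially routine; the only conceptual step is to recognize that the argument of Proposition~\ref{bth} works for any linear form in the $X_i$, and that the weights $c_i = E(X_i f\mid f=1)$ are precisely the Cauchy--Schwarz optimizers of the ratio $E(T_c\mid f=1)^2 / \|c\|^2$, which is what forces the sum of squares to appear on the right-hand side.
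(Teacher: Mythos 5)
Your proposal is correct and follows essentially the same path as the paper's proof: the paper also passes from $S_n$ to a weighted linear form $\smash{\widetilde S}_n=\sum_i\alpha_i X_i$, reruns the Proposition~\ref{bth} argument, and then picks $\alpha_i$ proportional to $E(X_if\mid f=1)$; your use of unnormalized weights $c$ with $\|c\|^2$ in the exponent is just the unnormalized version of the paper's constraint $\sum_i\alpha_i^2=1$. The only cosmetic caveat is the degenerate case $c=0$, where the bound is trivially true but your division by $\|c\|^2$ must be sidestepped.
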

\begin{proof}
	The proof follows the same strategy than the proof
	of proposition~\ref{bth}, the difference being
	that,
	instead of $S_n$,
	we work with the random variable 
$\tS_n$ given by
\begin{equation}
	\label{tsn}
	\tS_n\,=\,
	\sum_{1\leq i\leq n}
	\alpha_i X_i\,,
\end{equation}
where the $\alpha_i$'s are non--negative real numbers 
such that
	$\sum_{1\leq i\leq n}(\alpha_i)^2=1$.
Let $t>0$. Formula~\eqref{cin} holds for $\tS_n$ as well:
\begin{equation}
\label{tcin}
		E\big(|\tS_n|\,\big|\,f=1\big)
		\,\leq\,
      \frac{1}{P(f=1)}
	\int_{t}^{+\infty}
	P\big(|\tS_n|\geq u\big)\,
	du\,+\,t\,.
\end{equation}
Now $\tS_n$ is the sum of $n$ i.i.d. centered random variables 
which satisfy
\begin{equation}
\label{rboxu}
\forall i\in\unn\qquad -\frac{\alpha_i}{1-p}\,\leq\,\alpha_iX_i\,\leq\,\frac{\alpha_i}{p}\,.\end{equation}
Applying
the more general
Hoeffding's inequality \cite{HOE} to $\tS_n$, we get
\begin{equation}
	\label{hoefb}
	\forall u>0\qquad P\big(|\tS_n|\geq u\big)\,\leq\,
	2\exp\Big(-
	{{2p^2(1-p)^2}{}u^2}
\Big)\,.
\end{equation}
We perform then exactly the same step as in the 
proof of proposition~\ref{bth}, except that the
factor ${n}$ is replaced by $1$.
This way we obtain the following inequality:
\begin{equation}
\label{tfci}
		E\big(|\tS_n|\,\big|\,f=1\big)
		\,\leq\,
\frac{1 }{p(1-p)}
	\sqrt{ {2} 
	\ln \frac{2}{P(f=1)} }
		\,.
	\end{equation}
We use the inequality
$		\big| E\big(\tS_n\,\big|\,f=1\big) \big|
		\leq
		 E\big(|\tS_n|\,\big|\,f=1\big)$ and
		 we rewrite the inequality~\eqref{tfci} 
		 in exponential form to get
\begin{equation}
\label{tzfci}
	 P(f=1) \,\leq\,
2\exp\Big(
-\frac{1}{2} p^2(1-p)^2
	\Big( 
		E\big(\tS_n\,\big|\,f=1\big)
\Big)^2\Big)\,.
	\end{equation}
	Using the linearity of the expectation, 
	we have
\begin{equation*}
	\label{roha}
	E(\tS_nf)\,=\,
	\sum_{1\leq i\leq n}
	\alpha_i
	E(X_if)
\,,
\end{equation*}
whence, dividing by $P(f=1)$, 
\begin{equation}
	\label{droha}
		 E\big(\tS_n\,\big|\,f=1\big)
	\,=\,
	\sum_{1\leq i\leq n}
	\alpha_i
	E\big(X_if \,\big|\,f=1\big)
\,.
\end{equation}
The time has come to choose the 
$\alpha_i$'s. We take 
\begin{equation*}
	\label{tcoeffa}
	\forall i\in\unn\qquad
	\alpha_i\,=\,
      \frac{
	E\big(X_if \,\big|\,f=1\big)
      }{
      \sqrt{\displaystyle\sum_{1\leq j\leq n}\Big(E\big(X_jf \,\big|\,f=1\big)\Big)^2
}}
		\,.
\end{equation*}
With this choice, the equation~\eqref{droha} together with the inequality~\eqref{tzfci}
yield
the inequality~\eqref{tmyin} of the proposition.
\end{proof}
\noindent
Naturally, 
	proposition~\ref{rth} is stronger than
	proposition~\ref{bth}: a routine application of the
	Cauchy--Schwarz inequality shows that 
	the inequality~\eqref{vyin} 
	is implied by the inequality~\eqref{tmyin}.
	In the specific case where the function $f$ is non--decreasing,
	we have,
	thanks to formulas~\eqref{rmodif} and~\eqref{rw1}, 
	\begin{equation}
		E\big(X_if \,\big|\,f=1\big)\,=\,
      \frac{
1	
      }{p}
		P\big(i\in \cP(f) \,\big|\,f=1\big)\,,
	\end{equation}
	so that proposition~\ref{rth} yields the
	following corollary, which is an improvement of corollary~\ref{imme}.
	\begin{corollary}
	\label{crth}
	For any  non--decreasing boolean function $f$,
	we have
	\begin{equation}
		\label{gmyin}
	 P(f=1)\,\leq\,
2\exp\Big(
-\frac{ 1}{2}
	\sum_{1\leq i\leq n}
	\Big(
		(1-p)	P\big(i\in \cP(f) \,\big|\,f=1\big)
\Big)^2\Big)\,.
\end{equation}
\end{corollary}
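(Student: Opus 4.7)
The plan is to obtain the corollary as a direct consequence of proposition~\ref{rth} by re-expressing each term $p(1-p)\,E(X_i f \mid f=1)$ that appears on the right-hand side of~\eqref{tmyin} as $(1-p)\,P(i\in\cP(f)\mid f=1)$. Since proposition~\ref{rth} is already a general inequality valid for any boolean function, monotonicity will only enter through this identification step.

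First I would recall formula~\eqref{rmodif}, which states that for a non--decreasing function
\begin{equation*}
P\big(i\text{ is pivotal for }f\big)\,=\,E\big(fX_i\big)\,.
\end{equation*}
Next I would use the argument preceding formula~\eqref{rw1}: since the coordinate $\omega(i)$ is independent of the event $\{i\in\cP(f)\}$, and since for a non--decreasing function pivotality combined with $f=1$ forces $\omega(i)=1$, we have
\begin{equation*}
P\big(i\in\cP(f),\,f=1\big)\,=\,p\,P\big(i\in\cP(f)\big)\,=\,p\,E(fX_i)\,.
\end{equation*}
Dividing by $P(f=1)$ and using that $f\cdot 1_{\{f=1\}}=f$, this rewrites as
\begin{equation*}
E\big(X_i f\,\big|\,f=1\big)\,=\,\frac{E(X_i f)}{P(f=1)}\,=\,\frac{1}{p}\,P\big(i\in\cP(f)\,\big|\,f=1\big)\,.
\end{equation*}

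Consequently each summand in~\eqref{tmyin} satisfies
\begin{equation*}
p(1-p)\,E\big(X_i f\,\big|\,f=1\big)\,=\,(1-p)\,P\big(i\in\cP(f)\,\big|\,f=1\big)\,,
\end{equation*}
and substituting this equality into the right-hand side of~\eqref{tmyin} yields exactly~\eqref{gmyin}. There is no hard step here: the only mild subtlety is to observe that monotonicity of $f$ is precisely what guarantees the coupling between the event $\{i\in\cP(f)\}$ and the value of $\omega(i)$ under the conditioning $\{f=1\}$, which is what makes the factor $1/p$ appear when we pass from $E(X_i f\mid f=1)$ to the conditional pivotal probability.
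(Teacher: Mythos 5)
Your proof is correct and follows exactly the paper's own derivation: using \eqref{rmodif} and the independence argument behind \eqref{rw1} to rewrite $p\,E(X_i f\mid f=1)$ as $P(i\in\cP(f)\mid f=1)$, and then substituting into proposition~\ref{rth}. No difference from the paper's argument, and the identification $f\cdot 1_{\{f=1\}}=f$ for a boolean function is the right observation to pass to conditional expectations.
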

%
\noindent
At this point, we have generalized
to the 
Bernoulli product
measure of parameter~$p$ an inequality that Talagrand proved
in the case where $p=1/2$.
Indeed, 
denoting by $\mu= P_{1/2}$ 
the uniform measure on $\{0,1\}^n$, Talagrand's
inequality can be stated as follows: there exists a universal constant $K$ such that, 
for any  boolean function $f$,
we have
	\begin{equation}
		\label{talag}
	\sum_{1\leq i\leq n}
	\Big( 	
		\int fX_i\,d\mu
\Big)^2
\,\leq\,K \mu(f=1)^2\ln \frac{e}{
\mu(f=1)}\,.
\end{equation}
Clearly, we can rewrite the inequality~\eqref{talag} in an exponential
form to get an inequality similar to~\eqref{tmyin}, albeit
with non explicit constants
(remember that
Talagrand did not care
about their values).
In fact, the main purpose of Talagrand 
was to improve this first inequality (stated in
Proposition~$2.2$ of~\cite{INCT}). This first inequality
was just
a little warm--up before starting the real work! 
\section{The magic of Talagrand}
So, how did Talagrand proceed to go further?
Noticing that proposition~\ref{rth} holds
for any boolean function,
he applied it to the indicator function of the event
$\{\,k\in\ \cP(f)\,\}$, 
where $f$ is a non--decreasing boolean function. 
This yields
\begin{equation*}
		\label{magi}
	 P\big(
\,k\in\ \cP(f)\,
	 \big)\,\leq\,
2\exp\Big(
-\frac{ 1}{2}
	\sum_{1\leq i\leq n}
	\Big(
		p(1-p)	
	E\big(X_i 1_{\{ \,k\in\ \cP(f)\,\}}
		 \,\big|
		 \,k\in\ \cP(f)\,
	 \big)
\Big)^2\Big)\,.
\end{equation*}
The variable $X_k$ and the event
$\{ \,k\in\ \cP(f)\,\}$ are independent, therefore
\begin{equation*}
	E\big(X_k 1_{\{ \,k\in\ \cP(f)\,\}}\big)\,=\,0\,.
	\end{equation*}
Using formula~\eqref{rmodif} in the reverse direction, we can rewrite
the expectation 
for $i\neq k$
as follows:
\begin{equation}
	\forall i\neq k\qquad
	E\big(X_i 1_{\{ \,k\in\ \cP(f)\,\}}\big)\,=\,
	E\big(X_i X_kf\,\big)\,,
	\end{equation}
and the previous inequality becomes, in a non exponential form similar
to Talagrand's first inequality~\eqref{talag},
\begin{equation*}
		\label{tagi}
\sum_{\genfrac{}{}{0pt}{2}{1\leq i\leq n}{i\neq k}}
	\Big(
	E\big(X_i X_kf\,
	 \big)
\Big)^2
	 \,\leq\,
\frac{2 }{p^2(1-p)^2}	
	 \Big(P\big( k\in\ \cP(f)\big)\Big)^2
	 \ln \Big(
\frac{2}{
	 P\big(
\,k\in\ \cP(f)\,
\big)}\Big)
\,.
\end{equation*}
Next, 
we note
$c(p)={2 }{p^{-2}(1-p)^{-2}}$	
and we sum this inequality over $k$ to get
\begin{equation*}
		\label{stagi}
\sum_{\genfrac{}{}{0pt}{2}{1\leq i,k\leq n}{i\neq k}}
	\Big(
	E\big(X_i X_kf\,
	 \big)
\Big)^2
	 \,\leq\,c(p)
	\sum_{1\leq k\leq n}\!
	 \Big(P\big( k\in\ \cP(f)\big)\Big)^2
	 \ln \Big(
\frac{2}{
	 P\big(
\,k\in\ \cP(f)\,
\big)}\Big)
\,.
\end{equation*}
Finally,
in one of those feats of strength for which only he
has the secret, Talagrand proved the following
much stronger inequality for the uniform 
measure $\mu= P_{1/2}$: there exists a 
universal constant $K$ such that, 
for any  non--decreasing boolean function $f$,
we have
	\begin{equation}
		\label{etalag}
\sum_{\genfrac{}{}{0pt}{2}{1\leq i,k\leq n}{i\neq k}}
	\Big( 	
		\int fX_iX_k\,d\mu
\Big)^2
\,\leq\,K 
	\sum_{1\leq k\leq n}
	\mu\big( k\in\ \cP(f)\big)^2
\ln \frac{K}{
	\sum_{1\leq k\leq n}
	\mu\big( k\in\ \cP(f)\big)^2
}\,.
\end{equation}
The considerable improvement lies in the presence of the sum inside the
logarithm. 
Talagrand's proof is beautiful and mysterious. It rests
on a complicated 
intermediate result which involves a sort of
bootstrap mechanism.
The inequality~\eqref{etalag} attracted the 
interest of several researchers and
led to numerous unexpected developments.
For instance, the whole
theory of noise sensitivity of 
Benjamini, Kalai and Schramm \cite{BKS}
was born with a
generalization of this inequality.
An important question was whether inequality~\eqref{etalag} holds
for the product measure $P$ for $p\neq 1/2$.
It is known that the inequalities involving the uniform measure
can in principle be generalized to
the Bernoulli product measure. Keller \cite{KER} has designed
a procedure to deduce automatically some inequalities for the
biased measure from inequalities involving the uniform measure.
Anyway, the natural approach consists in adapting the proofs 
written for the uniform measure, as we did in
this paper for the first inequality of Talagrand.
However, this might turn out
to be very difficult in some cases.
After more 
than twenty years, Talagrand's inequality 
has been 
extended to the biased case by Keller and Kindler \cite{KEKI},
with a new proof which is simpler than the original proof of Talagrand.
Even more, they managed to prove the inequality for any boolean
function, non necessarily non--decreasing. However, upon inspection,
it seems that the proof of Talagrand would work also for any boolean
function!

\bibliographystyle{amsplain}
\bibliography{inc}

\providecommand{\bysame}{\leavevmode\hbox to3em{\hrulefill}\thinspace}
\providecommand{\MR}{\relax\ifhmode\unskip\space\fi MR }
\providecommand{\MRhref}[2]{%
  \href{http://www.ams.org/mathscinet-getitem?mr=#1}{#2}
}
\providecommand{\href}[2]{#2}
\begin{thebibliography}{10}

\bibitem{BKS}
Itai Benjamini, Gil Kalai, and Oded Schramm, \emph{Noise sensitivity of {B}oolean functions and applications to percolation}, Inst. Hautes \'{E}tudes Sci. Publ. Math. (1999), no.~90, 5--43. \MR{1813223}

\bibitem{CCFS}
J.~T. Chayes, L.~Chayes, Daniel~S. Fisher, and T.~Spencer, \emph{Finite-size scaling and correlation lengths for disordered systems}, Phys. Rev. Lett. \textbf{57} (1986), 2999--3002.

\bibitem{FK}
Ehud Friedgut and Gil Kalai, \emph{Every monotone graph property has a sharp threshold}, Proc. Am. Math. Soc. \textbf{124} (1996), no.~10, 2993--3002.

\bibitem{HOE}
Wassily Hoeffding, \emph{Probability inequalities for sums of bounded random variables}, J. Amer. Statist. Assoc. \textbf{58} (1963), 13--30.

\bibitem{KER}
Nathan Keller, \emph{A simple reduction from a biased measure on the discrete cube to the uniform measure}, Eur. J. Comb. \textbf{33} (2012), no.~8, 1943--1957.

\bibitem{KEKI}
Nathan Keller and Guy Kindler, \emph{Quantitative relation between noise sensitivity and influences}, Combinatorica \textbf{33} (2013), no.~1, 45--71.

\bibitem{odonnell}
Ryan O'Donnell, \emph{Analysis of {B}oolean functions}, Cambridge University Press, New York, 2014. \MR{3443800}

\bibitem{dose}
Ryan O'Donnell and Rocco~A. Servedio, \emph{Learning monotone decision trees in polynomial time}, SIAM J. Comput. \textbf{37} (2007), no.~3, 827--844 (English).

\bibitem{IT}
M.~Talagrand, \emph{Isoperimetry, logarithmic {Sobolev} inequalities on the discrete cube, and {Margulis}' graph connectivity theorem}, Geom. Funct. Anal. \textbf{3} (1993), no.~3, 295--314 (English).

\bibitem{INCT}
Michel Talagrand, \emph{How much are increasing sets positively correlated?}, Combinatorica \textbf{16} (1996), no.~2, 243--258.

\bibitem{UBT}
\bysame, \emph{Upper and lower bounds for stochastic processes. {Decomposition} theorems}, Ergeb. Math. Grenzgeb., vol.~60, Cham: Springer, 2021.

\end{thebibliography}
\end{document}